\newtheoremstyle{plain-alt}
  {}
  {}
  {\itshape}
  {}
  {}
  {. }
  {0em}
  {\thmnumber{#2}.\,\,\thmname{{\it #1}}\thmnote{ (#3)}}
\newtheoremstyle{definition-alt}
  {}
  {}
  {\normalfont}
  {}
  {}
  {. }
  {0em}
  {\thmnumber{#2}.\,\,\thmname{{\it #1}}\thmnote{ (#3)}}
\theoremstyle{plain-alt}
\theoremstyle{plain}
\newtheorem{lemma}{Lemma}[section]
\newtheorem{corollary}[lemma]{Corollary}
\newtheorem{theorem}[lemma]{Theorem}
\theoremstyle{definition}
\newtheorem{definition}[lemma]{Definition}
\newtheorem{remark}[lemma]{Remark}
\newtheorem{example}[lemma]{Example}
\newcommand{\Sol}{\operatorname{Sol}}
\newcommand{\trop}{\mathcal T}
\newcommand{\lin}{\operatorname{Lin}}
\title{Infinite matroids in tropical differential algebra}
\author[F. Aroca et al.]{F. Aroca, L. Bossinger, S. Falkensteiner, C. Garay Lopez, L. R. Gonzalez-Ramirez, C. V. Valencia Negrete}
\address{
Instituto de Matem\'aticas Unidad Oaxaca, Universidad Nacional Aut\'onoma de M\'exico,Le\'on 2, altos, 
Centro Hist\'orico,
68000 Oaxaca,
M\'exico
}
\email{fuen@im.unam.mx}
\address{
Instituto de Matem\'aticas Unidad Oaxaca, 
Universidad Nacional Aut\'onoma de M\'exico,
Le\'on 2, altos, 
Centro Hist\'orico,
68000 Oaxaca,
Mexico}
\email{lara@im.unam.mx}
\address{Max Planck Institute for Mathematics in the Sciences Leipzig, 
Inselstrasse 22,
04103 Leipzig,
Germany}
\email{seb.falkensteiner@gmail.com}
\address{Centro de Investigaci\'on en Matem\'aticas, A.C. (CIMAT). Jalisco S/N, Col. Valenciana CP. 36023 Guanajuato, Gto, M\'exico}
\email{cristhian.garay@cimat.mx}
\address{Instituto Polit\'ecnico Nacional, Escuela Superior de F\'isica y Matem\'aticas, Unidad Profesional Adolfo L\'opez Mateos Edificio 9, 07738, Cd. de M\'exico, M\'exico}
\email{lrgonzalezr@ipn.mx}
\address{Universidad Iberoamericana, A. C. (UIA).
Prolongaci\'{o}n Paseo de la Reforma 880, 
Lomas de Santa Fe,
01219 Ciudad de M\'{e}xico, M\'exico}
\email{carla.valencia@ibero.mx}
\date{\today}
\begin{document}


\keywords{Infinite Matroids, Differential Algebra, Tropical Differential Algebraic Geometry, Power Series Solutions, Linear Differential Equations}


\maketitle

\bigskip

\begin{abstract}
We consider a finite-dimensional vector space $W\subset K^E$ over an arbitrary field $K$ and an arbitrary set $E$. We show that the set $\mathcal{C}(W)\subset 2^E$ consisting of the minimal supports of $W$ are the circuits of a matroid on $E$.
In particular, we show that this matroid is cofinitary (hence, tame). 
When the cardinality of $K$ is large enough (with respect to the cardinality of $E$), then the set $\trop(W)\subset 2^E$ consisting of all the supports of $W$ is a matroid itself.

Afterwards we apply these results to tropical differential algebraic geometry and study the set of supports $\trop(Sol(\Sigma))\subset (2^{\mathbb{N}^{m}})^n$ of spaces of formal power series solutions $\text{Sol}(\Sigma)$ of systems of linear differential equations $\Sigma$  in differential variables $x_1,\ldots,x_n$ having coefficients in the ring ${K}[\![t_1,\ldots,t_m]\!]$. 
If $\Sigma$ is of differential type zero, then the set $\mathcal{C}(Sol(\Sigma))\subset (2^{\mathbb{N}^{m}})^n$ of minimal supports defines a matroid on $E=[n]\times\mathbb{N}^{m}$, and if the cardinality of $K$ is large enough, then the set of supports $\phi\circ\trop(Sol(\Sigma))$ itself is a matroid on $E$ as well. 
By applying the fundamental theorem of tropical differential algebraic geometry (\textsc{fttdag}), we give a necessary condition under which {the set of solutions $Sol(U)$ of a  system $U$ of tropical linear differential equations to be a matroid.}

We also give a counterexample to the \textsc{fttdag} for systems $\Sigma$ of linear differential equations over countable fields. In this case, the set $\phi\circ\trop(Sol(\Sigma))$ may not form a matroid.
\end{abstract}

\section{Introduction}
A fundamental concept in tropical algebraic geometry is the {\it tropicalization} $trop(X,v)\subseteq (\mathbb{R}\cup\{-\infty\})^n$ of an algebraic variety $X\subset K^n$ defined over a valued field $K=(K,v)$. 
Assume $X$ is defined by an ideal $I$ with constant coefficients (\emph{i.e.} $I\subset k[x_1,\dots,x_n]$ where $k\subset K$ is a field with trivial valuation $v_0:k\xrightarrow[]{}\{-\infty,0\}$). 
In this case we may consider the $K$-points of $X$ and tropicalize with respect to $v$ which yields a polyhedral fan $trop(X,v)$.
A classical result from tropical algebraic geometry states that $trop(X,v)$ coincides with the Bergman fan $\mathcal B(X)$ \cite[Theorem 9.6]{Sturmfels_solving}. If $X$ is a linear space, the Bergman fan can be obtained from its matroid $M(X)=(\{1,\ldots,n\},\mathcal{C}(X))$ (see e.g. \cite[p.165]{MS_tropicalBook} or \cite{Ardila-Klivans}).
On the other hand, we may consider the trivial valuation $v=v_0$. Note that the set 
\begin{equation}
    \label{eq:classical_trop}
    v_0(X):=\{(v_0(p_1),\ldots,v_0(p_n))\in \{-\infty,0\}^n\::\:(p_1,\ldots,p_n)\in X\},
\end{equation}
consists of all the {\it  supports} of the points of $X$: it suffices to consider the identification $\{-\infty,0\}^n\cong 2^{[n]}$ as the set of all the indicator functions of the subsets of $[n]$. 
In general we have $v_0(X)\subseteq trop(X,v_0)$, and this inclusion may be proper; for example, if $X\subset \mathbb{F}_2^3$ is the linear space spanned by  $\{(0,1,1), (1,0,1)\}$, then $$v_0(X)\cup\{(0,0,0)\}=trop(X,v_0),$$
which says that~\eqref{eq:classical_trop} carries potentially more information than the matroid $M(X)$.
We show that, if $X\subset K^E$ is a finite-dimensional vector space, then the above situation cannot happen if the cardinality of $K$ is large enough with respect to the cardinality of the set $E$; that is, the set of supports $v_0(X)\subset 2^E\cong\{0,-\infty\}^E$ and the matroid $M(X)=(E,\mathcal{C}(X))$  associated to $X$  can be identified with each other via
\begin{equation}
\label{eq:correspondence}
    \begin{tikzcd}
    (E,\mathcal{C}(X))=M(X) \arrow[rr, "\text{scrawls}", shift left=3] &  & v_0(X)=(E,\mathcal{S}(X)) \arrow[ll, "\text{circuits}", shift left=3].
    \end{tikzcd}
\end{equation}
To do this, we use the notion of infinite matroids under the cryptomorphisms of circuits~\cite{Infinte_matroids} and scrawls~\cite{BowlerCarmesin:2018}.
Up to our knowledge, this result is new in both cases when $E$ is finite and when $E$ is infinite. 
While it is possible to obtain that $M(X)=(E,\mathcal{C}(X))$ is a matroid using the results of thin sums families and their associated thin sums systems from \cite{AB2015}, we offer an independent proof which has the advantage of being more elementary. We believe that both proofs have their own advantages, so we have decided to include both.

Our interest in this type of questions comes from the theory of tropical differential algebraic geometry, where the tropicalization of the set of formal power series solutions of systems of homogeneous linear differential equations appears as a set of the form $v_0(X)\subset 2^E$ where $E$ is infinite, as in \eqref{eq:trop:eqdiff:chngd}. This is why in this paper we deal mainly with duals of {\it representable matroids} over an infinite set of labels $E$ (see Theorem~\ref{thm-tame}), which are not representable in the usual sense (see Remark \ref{rem:rep}), but we show that under the above hypothesis they do arise as the semigroup of the set of  supports of a vector space.

Also, we aim to further study the Boolean formal power series solutions of systems of tropical homogeneous linear differential equations. This study was initiated in~\cite{Trop_diff_eq} and it is a natural continuation for the tropical aspects of the differential algebraic theory of such systems, following the development of classical tropical geometry.
\subsection{Analogies between classical and tropical differential algebraic geometry}
The theory of tropical differential algebraic geometry was initiated by Grigoriev~\cite{Trop_diff_eq}, the fundamental theorem was proved by Aroca, Garay and Toghani in~\cite{Fundamental_theorem_TropDiffAlgGeom}, and is becoming an established active field of research with several contributions such as~\cite{MeretaGian,Fink-Toghani,Mereta_fundThm,chinos}.

Since the beginning, a natural question arose: which concepts of classical tropical algebraic geometry over a valued field $(K,v)$ can be generalized to the differential setting? More specifically, can tropical differential algebraic geometry be regarded as an infinite version of classical tropical algebraic geometry?

A cornerstone of classical tropical algebraic geometry is the fundamental theorem of tropical algebraic geometry (\textsc{fttag},~\cite[Theorem 3.2.3]{MS_tropicalBook}, a generalization of Kapranov's Theorem \cite{MR2289207} for hypersurfaces), which describes in three ways the tropicalization $trop(X,v)\subset \mathbb{R}^n$ coming from algebraic subvarieties $X\subset (K^*)^n$, where $K$ is an algebraically closed field and $v:K^*\xrightarrow[]{}\mathbb{R}$ is a non-trivial valuation. 
A tropical (partial) differential analogue of this result (\textsc{fttdag}) was successfully constructed in~\cite{Fundamental_theorem_TropDiffAlgGeom,falkensteiner2020fundamental,falkensteiner2023initials} giving three different descriptions of the tropical space of formal Boolean formal power series $v_0(X)\subset \mathbb{B}[\![t_1,\ldots,t_m]\!]^n$ that come from differential algebraic (DA) varieties $X\subset {K}[\![t_1,\ldots,t_m]\!]^n$, where ${K}$ is an uncountable algebraically closed field of characteristic zero and $v_0$ is the trivial valuation. Note that in~\cite{boulier2021relationship} it is shown that the uncountability condition can be replaced by countably infinite transcendence degree over the field of definiton of $X$.

A natural source of finite-dimensional vector spaces $X\subset K^E$ with  $E$ infinite are the sets of solutions of systems of homogeneous linear differential equations of differential type zero with coefficients in $K[\![t_1,\ldots,t_m]\!]$. We explore its consequences for distinct fields $K$ satisfying or not the conditions of the \textsc{fttdag}. Our main results on matroids (Theorem~\ref{Teorema: es un matroide} and Theorem~\ref{Teorema Son los Scrawls}) lead to the following result (Theorem~\ref{Teorema_Matroide_Lin}):
\begin{theorem}
\label{Thm_4.2-1.1}
Fix two positive integers $m,n$. Let $E=\mathbb{N}^m$ and let $\trop : K^E\longrightarrow 2^E$ be the support map (Definition~\ref{def:supp_map}). Let $\Sigma\subset {K}_{m,n}$ be a system of homogeneous linear differential equations of differential type zero and let $\trop(\Sol(\Sigma))\subset (2^{E})^n$ be the set of supports of $\Sol(\Sigma)$. Then 
\begin{enumerate}
    \item the minimal elements $\mathcal{C}(\Sol(\Sigma))$ of $\trop(\Sol(\Sigma))$ define the circuits of a matroid on $[n]\times\mathbb{N}^{m}$;
    \item if the cardinality of $K$ is large enough, then $\trop(\Sol(\Sigma))$ itself is the set of scrawls of $\mathcal{C}(\Sol(\Sigma))$.
\end{enumerate}
\end{theorem}
Obtaining that $\trop(\Sol(\Sigma))$ is the set of scrawls of $\mathcal{C}(\Sol(\Sigma))$ (see item (1)), implies in particular that, if we have two supports of solutions of $\Sigma$, then their union appears also as the support of a solution of $\Sigma$; this is, $\trop(\Sol(\Sigma))$ is a semigroup (with respect to the operation of union of sets). 
In item (2), the assumption on the cardinality of $K$ is strict as we show in Section~\ref{sec:counterexp}.

The previous result is about the structure of the tropicalization of the set of formal solutions of a classical system. On the other hand, following D. Grigoriev~\cite{Trop_diff_eq}, one can study from an algebraic and combinatorial perspective the {set of solutions} $X= \Sol(U)\subset \mathbb{B}[\![t_1,\ldots,t_m]\!]^n$ associated to a system $U\subset\mathbb{B}_{m,n}$ of tropical linear differential equations, disregarding wether or not $U$ is realizable in some field $K$, this is, independent of the existence of linear systems $\Sigma\subset K_{m,n}$ such that $U=trop(\Sigma)$ as in Definition~\ref{def:trop_pol}. These {sets of solutions} are always semigroups even for the case of partial differential equations $m>0$, as we show in Theorem~\ref{prop:B-Module}.

\subsection{Statement of results}
In Theorem~\ref{Teorema: es un matroide} we show that if $\{0\}\neq W\subset K^E$ is a finite-dimensional $K$-vector subspace, then the pair $M(W)=(E,\mathcal{C}(W))$ is a matroid, where $\mathcal{C}(W)\subset 2^E$ is the set of vectors in $W$ having minimal (nonempty) support. 
This seems to be known for experts, but we provide a rigorous proof of this.
We also show that any element of $\trop (W)$ is a union of circuits, and in Theorem~\ref{Teorema Son los Scrawls} we show that if $\# E < \# S(K)$, where $S(K)=K \cup \{K\}$ denotes the set-successor of $K$,
then also the converse holds true. In Theorem~\ref{prop:B-Module} we show that the {the set of solutions} $\Sol(U)\subset \mathbb{B}[\![T]\!]^n$ associated to a system $U\subset \mathbb{B}_{m,n}$ of homogeneous linear tropical differential equations is a semigroup.
In particular, using the \textsc{fttdag}, we give in Corollary~\ref{cor:real} a necessary condition for {the set of solutions} $X= \Sol(U)\subset \mathbb{B}[\![t_1,\ldots,t_m]\!]^n$ to be a matroid (or a {matroid of scrawls}, as in Definition \ref{def_trop_semigp}). 
As a consequence of the previous results, in Section~\ref{sec:counterexp}, we give a counterexample for the \textsc{fttdag} in the case of linear differential equations over countable fields.

\subsection{Roadmap}
The paper is organized as follows. In Section~\ref{sec:matroid}, we introduce standard preliminary material on matroid theory and we prove Theorem~\ref{Teorema: es un matroide}. In Section~\ref{Section:Scrawls}, we study the matroid of scrawls and prove Theorem~\ref{Teorema Son los Scrawls}.
In Section~\ref{section:diff_tls}, we discuss the theory of algebraic differential equations with coefficients in the ring ${K}[\![t_1,\ldots,t_m]\!]$ over an arbitrary field $K$, and we recast the result of the previous two sections for the case of homogeneous systems of linear differential equations of differential type zero. In Section~\ref{Section:Connections}, we discuss  tropical differential equations, go further and analyze the special case in which $K$ satisfies the hypotheses of the \textsc{fttdag}.
\section{The infinite matroid induced by a finite dimensional subspace of a vector space}
\label{sec:matroid}
In this section we denote by $E\neq\emptyset$ an arbitrary set and by $2^E$ the power set of $E$, which is ordered by inclusion. We consider $2^E$ as a semigroup endowed with the set union as operation.

\subsection{Basic theory of infinite matroids}

A matroid on $E$ may be given in terms of different collections of subsets of $E$: the circuits, the independent sets, or the bases; these are only some possibilities among the many ways of defining matroids when $E$ is finite. 
The matroid axioms were shown to be equivalent by Whitney~\cite{Whitney:1935} in the finite case. 
For the infinite case, the Whitney axioms need to be completed, see~\cite{Infinte_matroids}. 
In~\cite{BowlerCarmesin:2018}, another possibility to define matroids is exhibited by using scrawls.

\begin{definition}\label{Def_CIM}
Let $\mathcal{C}\subset 2^E$. We call $\mathcal{C}$ the \textit{set of circuits} if it satisfies the following axioms:
\begin{enumerate}
\item $\emptyset \notin \mathcal{C}$; \label{El vacio no pertenece a los circuitos}
\item No element of $\mathcal{C}$ is a subset of another; \label{Un circuito no puede estar contenido en otro}
\item \label{sumacircuitos} Whenever $X\subset C\in\mathcal{C}$ and $\{C_x\::\:x\in X\}$ is a family of elements of $\mathcal{C}$ such that $x\in C_y$ iff $x=y$ for all $x,y\in X$, then for every $z\in C\setminus(\bigcup_xC_x)$ there is an element $C'\in \mathcal{C}$ such that $z\in C'\subset (C\cup \bigcup_xC_x)\setminus X$;
\item \label{HayMaximalDeIndependientes} The set of $\mathcal{C}$-independents $\mathcal{I}(\mathcal{C}):=\{I\subset E\::\: C\not\subset I\:\forall C\in \mathcal{C}\}$ satisfies 
\begin{equation*}
\text{if }I\subset X\subset E \text{ and }I\in\mathcal{I}, \text{ then }\{I'\in \mathcal{I}\::\:I\subset I'\subset X\}\text{ has a maximal element}.
\end{equation*}
\end{enumerate}
In this case, $M=(E, \mathcal{C})$ is a \textit{matroid}, and $\mathcal{C}$ is called the \textit{set of circuits of $M$}.
\end{definition}

\begin{definition}
Let $M=(E,\mathcal{C})$ be a matroid given in terms of its circuits. An \textit{independent set} is a subset of $E$ that contains no circuits (compare item~\ref{HayMaximalDeIndependientes} in Definition~\ref{Def_CIM}). A \textit{basis} is a maximal independent set.
\end{definition}

With abuse of notation, a matroid $M$ is also denoted by $M=(E,\mathcal{B})$, where the elements in $\mathcal{B}$ are the \textit{bases} of $M$.
We also have the following definition of matroid via sets of \textit{scrawls}, which are the unions of circuits \cite[Section 2.2]{BowlerCarmesin:2018}.

\begin{definition}
\label{def_trop_semigp}
Let $\mathcal{S}\subset 2^E$. We call $\mathcal{S}$ the \textit{set of scrawls} if it satisfies the following axioms:
\begin{enumerate}
    \item $\mathcal{S}$ is a semigroup, i.e. any union of elements in $\mathcal{S}$ is in $\mathcal{S}$;
    \item $\mathcal{S}$ satisfies the conditions (3) and (4) in Definition~\ref{Def_CIM}, where  $\mathcal{I}(\mathcal{S}):=\{I\subset E\::\: C\not\subset I\:\forall C\in \mathcal{S}\setminus\{\emptyset\}\}$.
\end{enumerate}
In this case, $M=(E, \mathcal{S})$ is a \textit{matroid} and $\mathcal{S}$ is called the \textit{set of scrawls of $M$}.
\end{definition}
Definitions \ref{Def_CIM} and \ref{def_trop_semigp} are equivalent, indeed, given a set of circuits, we get a set of scrawls by taking all the finite unions of them, conversely, given a set of scrawls, we get a set of circuits by taking the minimal nom-empty elements with respect to inclusion. Notice that a set of scrawls naturally carries the structure of a semigroup and simultaneously encodes the information of a matroid.

Let $M=(E,\mathcal{B})$ be a matroid with basis elements $\mathcal{B}$. 
By~\cite[Theorem 3.1]{Infinte_matroids}, the complements $\mathcal{B}^*:= \{E \setminus B \::\: B \in \mathcal{B} \}$ form another matroid $M^*=(E,\mathcal{B}^*)$, called the \textit{dual matroid} to $M$. 
The circuits of $M^*$ are called \textit{cocircuits} {of $M$}. 
A matroid $M$ is called \textit{tame} if every intersection of a circuit and a cocircuit is finite~\cite{Infinte_matroids}, otherwise $M$ is called {\it wild}.

If all the circuits of a matroid $M$ (respectively $M^*$) are finite
then $M$ is called finitary (respectively cofinitary). These matroids are tame~\cite{AB2015}.
The matroids considered in this paper are cofinitary as we will show in Theorem~\ref{thm-tame}.

\subsection{The set of supports of a finite-dimensional vector space}
In this section, we consider the vector space $K^E$, where $E$ is as above and  $K$ is any field. We will consider finite-dimensional $K$-vector subspaces $\{0\}\neq W\subset K^E$.
\begin{definition}
\label{def:supp_map}
The \textit{support map} $\trop : K^E\longrightarrow 2^E$ is the mapping $(a_i)_{i\in E}\mapsto \{ i\in E \::\: a_i\neq 0\}$. We call $\trop(v)$ the \textit{support} of $v$, and $\trop(W)$ is defined as the set $\{\trop(\phi)\:|\:\phi\in W\}\subseteq 2^E$.
\end{definition}

It seems commonly known among experts in matroid theory that $\trop (W)$ has a natural matroid structure by considering the elements with minimal support as the circuits. 
For the sake of completeness, we provide a proof by describing the circuits.

\begin{definition}\label{def_matroid_min_supp}
Let $\{0\}\neq W\subset K^E$ be a finite-dimensional $K$-vector subspace. We define $\mathcal{C}(W)\subset 2^E$ to be the sets in $\trop (W)\setminus\emptyset$ that are minimal with respect to set inclusion.
\end{definition}

\begin{example}
\label{ex:nice_example}
The set $\mathcal{C}(W)$ may consist of infinitely many elements. 
Consider for instance $W$ generated by $\varphi_1=1 + \sum_{i \ge 2} t^i$, $\varphi_2=\sum_{i \ge 1} i\,t^i$. 
Then, for every $n>1$, we obtain that $\phi_n:=n \cdot \varphi_1-\varphi_2 \in W$ has the support $\trop(\phi_n) = \mathbb{N} \setminus \{n\}$. 
Since there is no non-zero element in $W$ whose support is a subset of $\trop(\phi_n)$, $\trop(\varphi_1)$ or $\trop(\varphi_2)$, we obtain that
\[ \mathcal{C}(W)=\{ \mathbb{N} \setminus \{n\}\::\: n \in \mathbb{N} \} . \]
\end{example}

Note that, by definition, $\mathcal{C}(W)$ satisfies item (i) and (ii) from Definition~\ref{Def_CIM} by explicitly excluding the empty set and only considering minimal sets which cannot be subsets of other minimal sets. 
The next result shows that $\mathcal{C}(W)\neq\emptyset$.
\begin{lemma}
\label{lem:Cnonempty}
Let $W$ be as above. For every element $0\neq\varphi \in W$ there is $0\neq\psi \in W$ of minimal support such that $\trop(\psi) \subset \trop(\varphi)$.
\end{lemma}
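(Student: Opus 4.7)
The natural first step is to restrict to $U := \{w \in W : \trop(w) \subset \trop(\varphi)\}$, which is a linear subspace of $W$ (closed under addition since $\trop(w_1 + w_2) \subset \trop(w_1) \cup \trop(w_2)$), hence finite-dimensional, and nonzero because $\varphi \in U$. Moreover, any $\psi \in U$ whose support is minimal among nonzero elements of $U$ automatically has minimal support in $W$: if some $w \in W \setminus \{0\}$ had $\trop(w) \subsetneq \trop(\psi)$, then $\trop(w) \subset \trop(\varphi)$, forcing $w \in U$ and contradicting minimality in $U$. The lemma therefore reduces to the claim that every nonzero finite-dimensional subspace $U \subset K^E$ contains a nonzero element of minimal support.

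I would prove this claim by induction on $d = \dim U$. The base case $d = 1$ is immediate, since any two nonzero elements of $U$ are proportional and share the same support. For $d \geq 2$, pick any $0 \neq \psi \in U$. If the support of $\psi$ is already minimal in $U$, we are done. Otherwise there exists $0 \neq \psi' \in U$ with $\trop(\psi') \subsetneq \trop(\psi)$, so one can choose an index $i \in \trop(\psi) \setminus \trop(\psi')$. The evaluation-at-$i$ functional $\mathrm{ev}_i : U \to K$ is then nonzero (as $\psi_i \neq 0$), so its kernel $V$ has dimension $d - 1$ and is itself nonzero (it contains $\psi'$). The inductive hypothesis produces an element $\eta \in V \setminus \{0\}$ whose support is minimal among nonzero elements of $V$.

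The observation that closes the induction is that such an $\eta$ is in fact also minimal in $U$: if $w \in U \setminus \{0\}$ satisfied $\trop(w) \subsetneq \trop(\eta)$, then since $\eta_i = 0$ we would also have $w_i = 0$, i.e.\ $w \in V$, contradicting minimality of $\eta$ in $V$. The only subtlety in the argument is the choice of the index $i$: one needs the starting element $\psi$ not to have minimal support in order to guarantee that passing to the kernel of $\mathrm{ev}_i$ produces a proper hyperplane of $U$ that is still nonzero, so the induction can proceed. Everything else is routine.
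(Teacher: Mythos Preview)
Your proof is correct. Both your argument and the paper's rest on finite-dimensionality to terminate a descent in supports, but the mechanisms differ. The paper argues by a chain condition: starting from $\varphi_1=\varphi$, if $\trop(\varphi_1)$ is not minimal one picks $\varphi_2$ with strictly smaller support, then $\varphi_3$, and so on; a strictly decreasing chain $\trop(\varphi_1)\supsetneq\trop(\varphi_2)\supsetneq\cdots$ forces $\{\varphi_1,\varphi_2,\ldots\}$ to be linearly independent, so the chain has length at most $\dim_K W$ and terminates at a minimal element automatically contained in $\trop(\varphi)$. You instead first restrict to the subspace $U$ of elements supported in $\trop(\varphi)$ and then run an induction on $\dim U$ by passing to the kernel of an evaluation functional. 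The paper's chain argument is marginally more direct and needs no preliminary reduction; your hyperplane induction is more modular and, incidentally, works without the case split you flag as the ``only subtlety'': for $d\ge 2$ one may choose \emph{any} index $i$ with $\mathrm{ev}_i|_U\neq 0$, since the kernel $V$ is then automatically $(d-1)$-dimensional (hence nonzero), and the transfer of minimality from $V$ to $U$ uses only that $i\notin\trop(\eta)$, which holds for every $\eta\in V$.
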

\begin{proof}
Since $\{0\}\neq W$, we have $s=\text{dim}_K(W)>0$.
If $s=1$, then $W=K\cdot \varphi_1$ with $0\neq \varphi_1\in K^E$, and it is clear that $\mathcal{C}(W)=\{S_1=\trop(\varphi_1)\neq\emptyset\}$.

Suppose that $s>1$.
Consider $0\neq\varphi_1\in W$ and let $S_1=\mathcal T(\varphi_1)$. 
If $S_1$ is minimal, we are done. Otherwise, by definition of minimality, there should exist  $\emptyset\subsetneq S_2\subsetneq S_1$ corresponding to some $0\neq \varphi_2\in W$. Then $\{\varphi_1,\varphi_2\}\subset W$ is linearly independent, otherwise, an expression $\lambda_1\varphi_1+\lambda_2\varphi_2=0$ with $(\lambda_1,\lambda_2)\neq(0,0)$  would imply that $\lambda_1,\lambda_2\neq0$, and this would yield $S_1=S_2$.
Now repeat the process: if $S_2$ is minimal, we are done, otherwise, there exists $\emptyset\subsetneq S_3\subsetneq S_2$ corresponding to some $0\neq \varphi_3\in W$. Then the chain $\emptyset\subsetneq S_3\subsetneq S_2\subsetneq S_1$ implies that $\{\varphi_1,\varphi_2,\varphi_3\}\subset W$ is linearly independent, otherwise, an expression  $\lambda_1\varphi_1+\lambda_2\varphi_2+\lambda_3\varphi_3=0$ with $(\lambda_1,\lambda_2,\lambda_3)\neq(0,0,0)$ would  yield at least one equality on the chain $S_3\subsetneq S_2\subsetneq S_1$. This process eventually finishes since the dimension of $W$ is finite.
\end{proof}

Given $W$ as above, below we show that $\mathcal{C}(W)$ also satisfies (3) and (4) from Definition~\ref{Def_CIM}, so  $M(W)=(E,\mathcal{C}(W))$ is indeed a matroid.

\begin{remark}
Suppose that $W=K\cdot \varphi_1$ with $0\neq \varphi_1\in K^E$. Then $\mathcal{C}(W)=\{\trop(\varphi_1)\}$, and so $(E,\mathcal{C}(W))$ is a matroid.
\end{remark}

Take a basis $\{ \varphi_1,\ldots ,\varphi_s\} \subset W\subset K^E$ of $W$ as a $K$-vector space. Then $W=\{\lambda\cdot\underline{\varphi}:=\sum_{i=1}^s\lambda_i \varphi_i \::\: \lambda \in K^s\}$.
Each element of the basis has an expression in the standard basis of $K^E$ of the form $\varphi_i = (a_{ij})_{j\in E}$ with $a_{ij}\in K$. 
For each $j\in E$, set $u^{(j)}:= (a_{1j},\ldots, a_{sj})\in K^s$. With this notation, an index $j\in E$ lies in the support of $\lambda\cdot\underline{\varphi}\in W$ if and only if $\lambda\cdot u^{(j)}\neq 0$, that is:
\begin{equation}\label{SoporteLambdaVarphi}
    \trop (\lambda\cdot\underline{\varphi})= \{ j\in E \::\: \lambda\cdot u^{(j)}\neq 0\}.
\end{equation}
We keep this notation for the following lemmata.

\begin{lemma}\label{DimensionDeSubsepacioDeVectoresComplementarios}
Given $X\subset E$, there exists $0\neq \phi\in W$ with $\trop (\phi ) \subset X$ if and only if the $K$-linear subspace of $K^s$ generated by $\{ u^{(j)}\}_{j\notin X}$ is a proper subspace of $K^s$.
\end{lemma}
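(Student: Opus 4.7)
The plan is to translate the condition $\trop(\phi)\subset X$ directly into a statement about orthogonality in $K^s$. Every $\phi\in W$ has the form $\phi=\lambda\cdot\underline{\varphi}$ for a unique $\lambda\in K^s$, and by \eqref{SoporteLambdaVarphi} the support of $\phi$ lies inside $X$ precisely when $\lambda\cdot u^{(j)}=0$ for every $j\notin X$. Hence the existence of a nonzero $\phi$ with $\trop(\phi)\subset X$ is equivalent to the existence of a nonzero $\lambda\in K^s$ annihilating each $u^{(j)}$ with $j\notin X$.

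Let $V\subset K^s$ be the $K$-linear span of $\{u^{(j)}\}_{j\notin X}$. First I would observe that a linear form on $K^s$ (identified with a vector $\lambda\in K^s$ via the standard pairing $\langle\lambda,u\rangle = \lambda\cdot u$) vanishes on every $u^{(j)}$ with $j\notin X$ if and only if it vanishes on all of $V$. Therefore the existence of a nonzero such $\lambda$ is equivalent to $V^{\perp}\neq\{0\}$ in $K^s$.

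Next I would invoke the basic dimension count: since $\dim V + \dim V^{\perp}=s$, we have $V^{\perp}\neq\{0\}$ if and only if $\dim V<s$, i.e.\ $V$ is a proper subspace of $K^s$. Combining the two equivalences gives the statement of the lemma.

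No serious obstacle is expected; the only subtlety is that the index set $\{j\notin X\}$ may be infinite, but this plays no role because the span $V$ is a subspace of the finite-dimensional space $K^s$, so its orthogonal complement is controlled by a finite number of the $u^{(j)}$ (namely any finite subset that already spans $V$). The proof is therefore a two-line linear algebra argument once the support condition is rephrased via the pairing $\lambda\cdot u^{(j)}$.
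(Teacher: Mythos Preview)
Your proposal is correct and follows essentially the same route as the paper: both translate $\trop(\phi)\subset X$ via \eqref{SoporteLambdaVarphi} into the condition that $\lambda$ annihilates every $u^{(j)}$ with $j\notin X$, and then observe that a nonzero such $\lambda$ exists precisely when the span of these vectors is a proper subspace of $K^s$. The only cosmetic difference is that you phrase the last step in terms of the orthogonal complement and the dimension formula $\dim V+\dim V^{\perp}=s$, while the paper speaks of the solution space of the linear system having dimension $s$ minus the rank; these are the same fact.
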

\begin{proof}
As a consequence of~\eqref{SoporteLambdaVarphi}, we have that $\trop (\lambda\cdot\underline{\varphi})\subset X$ if and only if $0\neq\lambda\in K^s$ is a solution of the system 
$$
\{u^{(i)}\cdot \lambda=0\}_{i\notin X}.
$$
This system has non zero solutions if and only if the $K$-linear subspace of $K^s$ generated by $\{ u^{(j)}\}_{j\notin X}$ is a proper subspace of $K^s$. 
Since $W=\{\lambda \cdot\underline{\varphi} \::\: \lambda \in K^s\}$, the statement follows, 
{as the dimension of the space of solutions of a linear system $AX=0$ in $s$ unknowns is $s$ minus the rank of the matrix $A$.}
\end{proof}
\begin{lemma}\label{Columnas de soporte de funcion}
Given $0 \ne \phi\in W$, let $L\subset K^s$ be the $K$-linear subspace generated by  $\{ u^{(i)}\}_{i\notin \trop (\phi )}$. Then, $L$ is a proper subspace of $K^s$ and $u^{(i)}\notin L$ for all $i\in \trop (\phi )$.
\end{lemma}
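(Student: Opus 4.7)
The plan is to rewrite the lemma entirely in the dual language of the coefficient vector $\lambda \in K^s$ with $\phi = \lambda\cdot\underline{\varphi}$ and then read off both assertions from the support formula \eqref{SoporteLambdaVarphi}. Since $\phi \neq 0$, we have $\lambda \neq 0$, so the map $f_\lambda \colon K^s \to K$ defined by $f_\lambda(v) = \lambda \cdot v$ is a nonzero linear functional and its kernel $H := \ker f_\lambda$ is a hyperplane in $K^s$.

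First I would establish the inclusion $L \subset H$. By \eqref{SoporteLambdaVarphi}, $i \notin \trop(\phi)$ is equivalent to $\lambda \cdot u^{(i)} = 0$, i.e.\ to $u^{(i)} \in H$. Hence every generator of $L$ lies in $H$, and since $H$ is a $K$-linear subspace, so does $L$. Because $\lambda \neq 0$, the hyperplane $H$ is a proper subspace of $K^s$, and therefore $L$ is a proper subspace as well.

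Second, for $i \in \trop(\phi)$, the same formula \eqref{SoporteLambdaVarphi} gives $\lambda \cdot u^{(i)} \neq 0$, that is $u^{(i)} \notin H$. Combined with $L \subset H$, this yields $u^{(i)} \notin L$, which is the remaining claim.

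I do not foresee a genuine obstacle: the entire lemma is essentially a rephrasing of the support characterization \eqref{SoporteLambdaVarphi} once one introduces the linear functional $f_\lambda$. The only thing to be mildly careful about is to invoke $\lambda \neq 0$ explicitly in order to conclude that $H$ (and thus $L$) is proper, and to note that this hypothesis is guaranteed by $\phi \neq 0$ together with the linear independence of $\{\varphi_1,\dots,\varphi_s\}$.
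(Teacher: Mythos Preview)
Your proof is correct and follows essentially the same approach as the paper: both arguments write $\phi=\lambda\cdot\underline{\varphi}$ and use the support formula~\eqref{SoporteLambdaVarphi} to see that the generators of $L$ lie in the kernel of $v\mapsto\lambda\cdot v$ while the $u^{(i)}$ with $i\in\trop(\phi)$ do not. The only cosmetic difference is that the paper cites Lemma~\ref{DimensionDeSubsepacioDeVectoresComplementarios} for the properness of $L$, whereas you obtain it directly from the hyperplane inclusion $L\subset H$; the underlying computation is the same.
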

\begin{proof}
As a direct consequence of Lemma~\ref{DimensionDeSubsepacioDeVectoresComplementarios}, $L$ is a proper subspace of $K^s$. Now, $\phi$ is an element of $W$ if and only if it is of the form $\lambda\cdot\underline{\varphi}$ and an element $i\in E$ is in the support of $\lambda\cdot\underline{\varphi}$ if and only if $\lambda\cdot u^{(i)}\neq 0$. Since $L$ is generated by the set $\{ u^{(i)}\}_{i\notin \trop (\phi )}$, we have that $\lambda\cdot u^{(i)}=0$ for all elements in $L$.
\end{proof}

\begin{lemma}\label{condiciones para ser minimal}
Given $C\subset E$, let $L\subset K^s$ be the space spanned by $\{ u^{(i)}\}_{i\notin C}$. Then $C\in\mathcal{C}(W)$ if and only if $L$ is ($s-1$)-dimensional and $u^{(i)}\notin L$ for all $i\in C$.
\end{lemma}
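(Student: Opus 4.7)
The plan is to prove both directions of the equivalence using Lemma~\ref{DimensionDeSubsepacioDeVectoresComplementarios} (supports of nonzero vectors in $W$ correspond precisely to proper spanning subspaces of ``complementary'' columns) together with Lemma~\ref{Columnas de soporte de funcion} (a support $\trop(\phi)$ forces both $L$ to be proper and $u^{(i)}\notin L$ on the support). Everything reduces to linear algebra in the finite-dimensional space $K^s$, so the arbitrariness of $E$ does not cause trouble.

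For the ``only if'' direction, assume $C=\trop(\phi)\in\mathcal{C}(W)$ is minimal among the nonempty supports. Lemma~\ref{Columnas de soporte de funcion} immediately yields that $L$ is proper and that $u^{(i)}\notin L$ for every $i\in C$, so it remains to exclude the case $\dim L\le s-2$. I would argue by contradiction: pick $i_0\in C$; since $u^{(i_0)}\notin L$, the subspace $L'':=L+\Span(u^{(i_0)})$ has dimension $\dim L+1\le s-1$, hence is proper in $K^s$. But $L''$ is exactly the span of $\{u^{(j)}\}_{j\notin C\setminus\{i_0\}}$, so Lemma~\ref{DimensionDeSubsepacioDeVectoresComplementarios} produces a nonzero $\phi'\in W$ with $\trop(\phi')\subset C\setminus\{i_0\}\subsetneq C$. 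Since $\phi'\neq 0$, its support is nonempty and strictly contained in $C$, contradicting the minimality of $C$.

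For the ``if'' direction, assume $\dim L=s-1$ and $u^{(i)}\notin L$ for every $i\in C$. As $L$ is a hyperplane, there is $0\neq \lambda\in K^s$, unique up to scalar, with $L=\{v\in K^s:\lambda\cdot v=0\}$. Setting $\phi:=\lambda\cdot \underline{\varphi}\in W$, the identity~\eqref{SoporteLambdaVarphi} gives $\trop(\phi)=\{j\in E:u^{(j)}\notin L\}$, and this equals $C$ exactly: indices outside $C$ lie in $L$ by the very definition of $L$, while indices inside $C$ lie outside $L$ by hypothesis. Thus $C$ is realized as a nonempty support. For minimality, suppose some nonzero $\phi'\in W$ had $C':=\trop(\phi')$ with $\emptyset\neq C'\subsetneq C$. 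By Lemma~\ref{DimensionDeSubsepacioDeVectoresComplementarios}, the span $L'$ of $\{u^{(j)}\}_{j\notin C'}$ is proper; since $C'\subset C$, the spanning set for $L$ is contained in that for $L'$, so $L'\supseteq L$, and because $\dim L=s-1$ this forces $L'=L$. But then any $j\in C\setminus C'$ (which is nonempty as $C'\subsetneq C$) would satisfy $u^{(j)}\in L'=L$, contradicting the hypothesis.

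The main obstacle I expect is in the ``only if'' direction: turning the bound $\dim L\le s-2$ into a strictly smaller support requires enlarging $L$ by exactly one dimension when adjoining $u^{(i_0)}$, which is precisely what Lemma~\ref{Columnas de soporte de funcion} buys us via $u^{(i_0)}\notin L$. The ``if'' direction, by contrast, is essentially bookkeeping once the hyperplane $L$ is cut out by a linear functional $\lambda$ and $\trop(\phi)=C$ is read off from~\eqref{SoporteLambdaVarphi}.
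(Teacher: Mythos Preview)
Your proof is correct and follows essentially the same approach as the paper's: both reduce the question to linear algebra in $K^s$ via Lemmata~\ref{DimensionDeSubsepacioDeVectoresComplementarios} and~\ref{Columnas de soporte de funcion}, the hyperplane/functional description of $L$, and the construction of a strictly smaller support when $\dim L\le s-2$. The only difference is organizational---the paper argues by case analysis on $\dim L$ while you argue by implication---and your treatment of the case $\dim L\le s-2$ is in fact slightly slicker: adjoining a single $u^{(i_0)}$ to $L$ and invoking Lemma~\ref{DimensionDeSubsepacioDeVectoresComplementarios} directly avoids the paper's explicit completion to a basis of $K^s$ and the auxiliary linear system.
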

\begin{proof}
Suppose that $C\in\mathcal{C}(W)$, then $C=\trop (\phi)$ for some $0 \ne \phi\in W$, and it follows from Lemma \ref{Columnas de soporte de funcion} that $L$ is a proper subspace of $K^s$ and $u^{(i)}\notin L$ for all $i\in C$. If the dimension of $L$ is $d< s-1$, we take $i_1,i_2,\ldots i_{s-d}$ in $C$ such that the subspace spanned by $L$ and $\{ u^{(i_k)}\}_{k=1,\ldots ,s-d}$ is $K^s$. The system $\{u^{(i)}\cdot \lambda=0\}_{i\notin C}\cup \{u^{(i_k)}\cdot \lambda=0\}_{k=2,\ldots ,s-d}\cup \{u^{(i_1)}\cdot \lambda=1\}$ has a unique solution $0\neq\lambda\in K^s$. For this solution, $\trop (\lambda\cdot\underline{\varphi})\subset C$. Since $\{i_k\}_{k=2,\ldots ,s-d}\subset C$ and $i_k\notin \trop (\lambda\cdot\underline{\varphi})$ for $k=2,\ldots ,s-d$, the set inclusion is strict (so $C$ is not minimal). So, we deduce that $L$ is $(s-1)$-dimensional.

Conversely, suppose that $L$ is ($s-1$)-dimensional. Then a solution of the system $\{u^{(i)}\cdot \lambda=0\}_{i\notin C}$ is unique up to scalar multiplication. If $0\neq\lambda\in K^s$ is such a solution, then the only elements of $W$ with support contained in $C$ are scalar multiples of $\phi := \lambda\cdot \underline{\varphi}$, so they all have the same support. Now $\trop (\phi)\subset C$ implies that the $K$-linear subspace generated by $\{ u^{(i)}\}_{i\notin \trop (\phi )}$ is a proper subspace of $K^s$ containing $L$, and since $L$ is ($s-1$)-dimensional, it is then the whole $L$.

Now, if $i\in C$ implies that $u^{(i)}\notin L$, then $u^{(i)}\in L$ if and only if $i\notin C$; indeed, the converse $i\notin C$ implies $u^{(i)}\in L$ is implied by the construction of $L$. Likewise, $u^{(i)}\in L$ if and only if $i\notin \trop (\phi )$ by Lemma~\ref{Columnas de soporte de funcion} and the previous paragraph. This yields $C=\trop (\phi )$ and thus  $C\in\mathcal{C}(W)$.
\end{proof}

\begin{lemma}\label{ExisteMinimalQueContieneElemento}
Given $0\neq \phi\in W$,
for every $z\in \trop (\phi )\subset E$ there exists a minimal element $C\in \trop (W)\setminus\emptyset$ such that $z\in C\subset \trop (\phi )$.
\end{lemma}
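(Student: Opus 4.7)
The plan is to mimic the descending-chain argument from the proof of Lemma~\ref{lem:Cnonempty}, but with the extra constraint that $z$ stays in the support. I would set
\[
V := \{\psi \in W \setminus \{0\} : z \in \trop(\psi) \subset \trop(\phi)\},
\]
which is nonempty since $\phi \in V$. The strategy has two steps: first, produce $\psi \in V$ whose support is minimal among $\{\trop(\chi) : \chi \in V\}$; second, show that this $\trop(\psi)$ is actually minimal in all of $\trop(W)\setminus\emptyset$, so that $C := \trop(\psi)$ gives the desired circuit.

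For the first step, I would argue that any strictly descending chain $\trop(\psi_1)\supsetneq \trop(\psi_2)\supsetneq \cdots$ with $\psi_i \in V$ must terminate. As in Lemma~\ref{lem:Cnonempty}, a hypothetical nontrivial relation $\sum_{i=1}^{k} \lambda_i\psi_i = 0$ with smallest nonzero index $i_0$ would force $\trop(\psi_{i_0}) \subset \bigcup_{i>i_0}\trop(\psi_i) = \trop(\psi_{i_0+1})$, contradicting strict decrease; hence the $\psi_i$ are linearly independent and the chain has length at most $\dim_K W$.

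For the second step, suppose toward contradiction that some $\chi \in W\setminus\{0\}$ satisfies $\emptyset\subsetneq \trop(\chi)\subsetneq \trop(\psi)$. If $z\in \trop(\chi)$, then $\chi \in V$ strictly refines $\psi$, contradicting minimality. Otherwise $z\notin \trop(\chi)$; I would pick any $j_0\in \trop(\chi)$ and set $\alpha := -\psi_{j_0}/\chi_{j_0}$, which is well-defined and nonzero since $j_0\in \trop(\chi)\subset \trop(\psi)$ forces both entries to be nonzero. The combination $\psi + \alpha\chi$ then vanishes at $j_0$, satisfies $(\psi+\alpha\chi)_z = \psi_z\neq 0$ (because $\chi_z = 0$), has support strictly inside $\trop(\psi)$, and is nonzero (otherwise $\psi$ and $\chi$ would be proportional, contradicting $\trop(\chi)\subsetneq \trop(\psi)$). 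Thus $\psi+\alpha\chi \in V$ has strictly smaller support than $\psi$, contradicting minimality of $\psi$ in $V$. The hard part will be exactly this exchange step: one must turn a candidate $\chi$ whose support misses $z$ into an element of $V$ that strictly refines $\trop(\psi)$, which is what forces a support-minimal element of $V$ to be genuinely circuit-minimal in $W$.
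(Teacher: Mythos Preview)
Your argument is correct and takes a genuinely different route from the paper. The paper works entirely through the coordinate vectors $u^{(j)}\in K^s$: it lets $L=\Span\{u^{(i)}:i\notin\trop(\phi)\}$, and when $\dim L\le s-2$ it completes $L\cup\{u^{(z)}\}$ to a basis of $K^s$ using additional $u^{(i_k)}$'s from $\trop(\phi)$, then solves a linear system forcing the $i_k$-coordinates to vanish and the $z$-coordinate to equal $1$; the resulting support has complement spanning a hyperplane, so Lemma~\ref{condiciones para ser minimal} certifies it as a circuit. Your approach bypasses this $u^{(j)}$-machinery entirely: a bounded descending-chain argument (\`a la Lemma~\ref{lem:Cnonempty}) gives a support-minimal $\psi$ in the constrained set $V$, and then a clean exchange step promotes minimality-in-$V$ to minimality-in-$\trop(W)$. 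What your approach buys is self-containment and elementarity---it would work verbatim in any setting where supports and finite dimension make sense, without ever fixing a basis or invoking Lemma~\ref{condiciones para ser minimal}. What the paper's approach buys is uniformity: the same $u^{(j)}$-framework drives Lemmata~\ref{DimensionDeSubsepacioDeVectoresComplementarios}--\ref{EnEspacioFinitoLaXEsFinita} and later the scrawl results, so the proof here slots into an already-running engine rather than introducing a separate mechanism.
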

\begin{proof}
Let $L$ be the subspace generated by $\{ u^{(i)}\}_{i\notin \trop (\phi )}$.
By Lemma~\ref{DimensionDeSubsepacioDeVectoresComplementarios}, the subspace $L$ is of dimension $0<d< s$. Moreover, since $\phi$ is a combination of the $\varphi_i$'s, for all $i\in \trop (\phi )$ it holds that $u^{(i)}\notin L$. 
\begin{enumerate}
    \item 
    If $d=s-1$, then, by Lemma~\ref{condiciones para ser minimal}, $\trop (\phi )$ is minimal.
    \item
    Suppose that the dimension of $L$ is $d\leq s-2$. Take $i_2,\ldots i_{s-d}$ in $C$ such that the subspace generated by $L$, $u^{(z)}$ and $\{ u^{(i_k)}\}_{k=2,\ldots ,s-d}$ is $K^s$. The system $\{u^{(i)}\cdot \lambda=0\}_{i\notin C}\cup \{u^{(i_k)}\cdot \lambda=0\}_{k=2,\ldots ,s-d}\cup \{u^{(z)}\cdot \lambda=1\}$ has a unique solution $0\neq\lambda\in K^s$. For this solution, $z\in \trop (\lambda\cdot\underline{\varphi})\subset C\setminus \{i_k)\}_{k=2,\ldots ,s-d}$. Since the subspace generated by $L$ and $\{ u^{(i_k)}\}_{k=2,\ldots ,s-d}$ is of dimension $s-1$, again by Lemma~\ref{condiciones para ser minimal}, $\trop (\lambda\cdot\underline{\varphi})$ is minimal.\qedhere
\end{enumerate}
\end{proof}

\begin{lemma}
\label{EnEspacioFinitoLaXEsFinita}
Let $X\subset E$ and let $\{\phi_x\::\:x\in X\}\subset W$ be a set such that $\trop (\phi_x)\cap X=\{x\}$ for all $x\in X$. Then the vectors $\{ u^{(x)}\::\:x\in X\}$ are linearly independent.
\end{lemma}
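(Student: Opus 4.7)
The strategy is to exploit the biorthogonality between the coefficient vectors $\lambda^{(x)}$ representing the $\phi_x$'s in the basis $\{\varphi_1,\dots,\varphi_s\}$ and the columns $u^{(x)}\in K^s$. The hypothesis $\trop(\phi_x)\cap X=\{x\}$ is perfectly suited to produce such a biorthogonality relation, after which linear independence of the $u^{(x)}$ follows by a standard dual-basis style argument.

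First I would fix notation by writing, for each $x\in X$, $\phi_x=\lambda^{(x)}\cdot\underline{\varphi}$ for a (nonzero) vector $\lambda^{(x)}\in K^s$, which is possible because $\phi_x\in W$. The characterization of supports recalled in~\eqref{SoporteLambdaVarphi} then translates the hypothesis $\trop(\phi_x)\cap X=\{x\}$ into the two conditions
\begin{equation*}
\lambda^{(x)}\cdot u^{(x)}\neq 0,\qquad \lambda^{(x)}\cdot u^{(y)}=0\text{ for all }y\in X\setminus\{x\}.
\end{equation*}
Thus the linear functionals $\mu\mapsto \lambda^{(x)}\cdot\mu$ on $K^s$ behave like a system dual to the family $\{u^{(y)}\}_{y\in X}$, up to the nonzero scalars $\lambda^{(x)}\cdot u^{(x)}$.

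To verify linear independence of $\{u^{(x)}\::\:x\in X\}$, I would take an arbitrary finite linear relation $\sum_{x\in F}c_x u^{(x)}=0$ with $F\subset X$ finite and $c_x\in K$, then fix $y\in F$ and apply the functional $\lambda^{(y)}\cdot(-)$ to both sides. Using the two bullet points above, every term with $x\neq y$ vanishes, leaving $c_y\cdot(\lambda^{(y)}\cdot u^{(y)})=0$; since $\lambda^{(y)}\cdot u^{(y)}\neq 0$, this forces $c_y=0$. Ranging over $y\in F$ yields $c_y=0$ for all $y$, so the vectors are linearly independent. As a byproduct, since these vectors live in the $s$-dimensional space $K^s$, one automatically obtains $\#X\leq s=\dim_K W<\infty$, which justifies the name of the lemma.

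There is no real obstacle in the argument; the only point requiring a little care is that in the infinite case the statement of linear independence should be interpreted as independence of every finite subfamily, so one must start from a finite relation before applying the functionals $\lambda^{(y)}\cdot(-)$. Everything else is an immediate consequence of~\eqref{SoporteLambdaVarphi} and the biorthogonality imposed by the hypothesis.
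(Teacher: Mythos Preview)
Your proof is correct and follows essentially the same approach as the paper: both introduce the coefficient vectors $\lambda^{(x)}$ with $\phi_x=\lambda^{(x)}\cdot\underline{\varphi}$ and exploit the fact, coming from~\eqref{SoporteLambdaVarphi}, that $\lambda^{(x)}\cdot u^{(x)}\neq 0$ while $\lambda^{(x)}\cdot u^{(y)}=0$ for $y\in X\setminus\{x\}$. The paper phrases this geometrically (showing $u^{(x)}\notin L_x\supset\Span\{u^{(y)}:y\neq x\}$ via Lemma~\ref{Columnas de soporte de funcion}), whereas you write out the dual-basis argument explicitly; the content is identical.
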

\begin{proof}
For each $x\in X$, let $\lambda_x\in K^s$ be such that $\phi_x=\lambda_x\cdot \underline{\varphi}$ and let $L_x$ be the linear subspace generated by $\{ u^{(i)}\}_{i\notin\trop (\phi_x )}$. By Lemma~\ref{Columnas de soporte de funcion}, since $x\in\trop (\phi_x)$, then $u^{(x)}\notin L_x$. Now, $\trop (\phi_x)\cap X=\{x\}$ implies that $u^{(y)}\in \{ u^{(i)}\}_{i\notin\trop (\phi_x )}$ for $y\in X\setminus \{x\}$, then $u^{(x)}$ is not in the space generated by $\{ u^{(i)}\}_{i\in X\setminus \{ x\}}\subset L_x$.
\end{proof}

\begin{lemma}\label{lem:3}
Property~\ref{sumacircuitos} of Definition~\ref{Def_CIM} holds for $(E, \mathcal C(W))$.
\end{lemma}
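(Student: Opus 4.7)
The plan is to lift the combinatorial data to the underlying vector space $W$ and then use a single linear combination to produce a vector in $W$ whose support witnesses the desired circuit. Specifically, write $C = \trop(\phi_0)$ and, for each $x \in X$, $C_x = \trop(\phi_x)$ for appropriate $\phi_0, \phi_x \in W$. The hypothesis ``$x \in C_y$ iff $x = y$'' translates to $\phi_x(y) = 0$ for $y \in X \setminus \{x\}$, while $\phi_x(x) \neq 0$ and $\phi_0(x) \neq 0$ for every $x \in X \subset C$.

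Before constructing the combination, I would first note that $X$ must be finite. Indeed, Lemma~\ref{EnEspacioFinitoLaXEsFinita} applies to the family $\{\phi_x\}_{x \in X}$ and shows that the vectors $\{u^{(x)}\}_{x \in X}$ are linearly independent in $K^s$, where $s = \dim_K W$. Hence $|X| \le s < \infty$, and finite linear combinations indexed by $X$ make perfect sense.

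Next, I would define $\phi := \phi_0 + \sum_{x \in X} \alpha_x \phi_x \in W$ with the scalars chosen to cancel every coordinate in $X$, namely $\alpha_x := -\phi_0(x)/\phi_x(x)$. Evaluating at $y \in X$, the property $\phi_x(y) = 0$ for $x \neq y$ collapses the sum and gives $\phi(y) = \phi_0(y) + \alpha_y \phi_y(y) = 0$. On the other hand, for $z \in C \setminus \bigcup_x C_x$, we have $\phi_x(z) = 0$ for every $x \in X$, and so $\phi(z) = \phi_0(z) \neq 0$. Since trivially $\trop(\phi) \subset \trop(\phi_0) \cup \bigcup_x \trop(\phi_x)$, we conclude
\begin{equation*}
z \in \trop(\phi) \subset (C \cup \bigcup_{x \in X} C_x) \setminus X.
\end{equation*}

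Finally, because $\phi \neq 0$, Lemma~\ref{ExisteMinimalQueContieneElemento} provides a minimal-support element $C' \in \mathcal{C}(W)$ with $z \in C' \subset \trop(\phi)$, which is precisely what property \eqref{sumacircuitos} of Definition~\ref{Def_CIM} requires. The only conceptual hurdle is ensuring that the sum defining $\phi$ is finite; once the reduction to finite $X$ via Lemma~\ref{EnEspacioFinitoLaXEsFinita} is in place, the rest is a direct Gaussian-elimination-style cancellation and a single appeal to the existence of minimal supports below a given support.
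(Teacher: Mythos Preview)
Your proof is correct and follows essentially the same approach as the paper: both arguments first invoke Lemma~\ref{EnEspacioFinitoLaXEsFinita} to reduce to finite $X$, then form the linear combination $\phi_0 - \sum_{x\in X}\frac{\phi_0(x)}{\phi_x(x)}\phi_x$ to kill the $X$-coordinates while preserving $z$, and finally apply Lemma~\ref{ExisteMinimalQueContieneElemento} to extract a circuit through $z$ inside $\trop(\phi)$. The only cosmetic difference is notation.
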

\begin{proof}
Suppose that $X\subset C\in\mathcal{C}$ and $\{C_x\::\:x\in X\}$ is a family of elements of $\mathcal{C}$ such that $x\in C_y$ iff $x=y$ for all $x,y\in X$. Then, since $W$ is finitely generated, by Lemma~\ref{EnEspacioFinitoLaXEsFinita}, $X$ is a finite set.

Choose $\varphi= (b_j)_{j\in E}\in W$ such that $\trop (\varphi )=C$ and, for each $x\in X$, choose $\varphi_x =({a_x}_j)_{j\in E}\in W$ such that $\trop (\varphi_x )=C_x$.

Set $\phi:= \varphi - \sum_{x\in X} \frac{b_x}{{a_x}_x}\varphi_x$. We have that $C\setminus(\bigcup_xC_x)\subset \trop (\phi )\subset (C\cup \bigcup_xC_x)\setminus X$ and the result follows from Lemma~\ref{ExisteMinimalQueContieneElemento}.
\end{proof}

\begin{lemma}\label{lem:4}
Property~\ref{HayMaximalDeIndependientes} of Definition~\ref{Def_CIM} holds for $(E,\mathcal C(W))$.
\end{lemma}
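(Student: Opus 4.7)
The plan is to recast the question as a finite-dimensional basis-extension problem through the column vectors $u^{(j)}\in K^s$ introduced before Lemma~\ref{DimensionDeSubsepacioDeVectoresComplementarios}. The first step is to establish the following characterization: a subset $I\subset E$ belongs to $\mathcal{I}(\mathcal{C}(W))$ if and only if $\{u^{(j)}\}_{j\in E\setminus I}$ spans $K^s$. Indeed, if this family fails to span $K^s$, then Lemma~\ref{DimensionDeSubsepacioDeVectoresComplementarios} yields a nonzero $\phi\in W$ with $\trop(\phi)\subset I$, and Lemma~\ref{lem:Cnonempty} produces a circuit contained in $\trop(\phi)\subset I$; conversely, if $I$ contains a circuit $C$, then Lemma~\ref{DimensionDeSubsepacioDeVectoresComplementarios} applied to $C$ forces $\{u^{(j)}\}_{j\in E\setminus C}$, and a fortiori the subfamily $\{u^{(j)}\}_{j\in E\setminus I}$, to fail to span $K^s$.

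Next, suppose $I\subset X\subset E$ with $I\in\mathcal{I}(\mathcal{C}(W))$. Set $V:=\Span_K\{u^{(j)}:j\in E\setminus X\}\subset K^s$, write $d=\dim_K V$, and put $k=s-d$. Using the disjoint decomposition $E\setminus I=(E\setminus X)\sqcup(X\setminus I)$ together with the fact that $\{u^{(j)}\}_{j\in E\setminus I}$ spans $K^s$, the vectors $\{u^{(j)}:j\in X\setminus I\}$ must generate $K^s$ modulo $V$. I would choose indices $j_1,\ldots,j_k\in X\setminus I$ whose classes in $K^s/V$ form a basis, and set
\[
I':=X\setminus\{j_1,\ldots,j_k\}.
\]
Then $I\subset I'\subset X$, while $\{u^{(j)}\}_{j\in E\setminus I'}=\{u^{(j)}\}_{j\in(E\setminus X)\cup\{j_1,\ldots,j_k\}}$ spans $K^s$ by construction; hence $I'\in\mathcal{I}(\mathcal{C}(W))$ by the characterization above.

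To verify maximality, any $I''$ with $I'\subsetneq I''\subset X$ must add to $I'$ some $j_l$, so $E\setminus I''\subset(E\setminus X)\cup(\{j_1,\ldots,j_k\}\setminus\{j_l\})$, and the corresponding span has dimension at most $d+(k-1)=s-1<s$; the characterization then forces $I''\notin\mathcal{I}(\mathcal{C}(W))$. The main obstacle one might anticipate is a Zorn's lemma argument, since in an arbitrary infinite matroid the union of a chain of independent sets need not be independent. The crucial ingredient that sidesteps this difficulty is the finite dimensionality of $W$: because $\dim_K K^s=s<\infty$, the construction only removes finitely many indices from $X$, and the whole argument reduces to elementary finite-dimensional linear algebra.
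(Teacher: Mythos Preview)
Your proof is correct and follows essentially the same approach as the paper: both translate independence into the spanning condition $\Span\{u^{(j)}:j\in E\setminus I\}=K^s$, extend the vectors indexed by $E\setminus X$ to a spanning set using finitely many indices from $X\setminus I$, and take $I'$ to be $X$ with those indices removed. Your version is somewhat more detailed---you spell out the characterization of $\mathcal{I}(\mathcal{C}(W))$ via Lemmas~\ref{lem:Cnonempty} and~\ref{DimensionDeSubsepacioDeVectoresComplementarios}, handle the case $X\in\mathcal{I}$ uniformly (it is $k=0$), and explicitly verify maximality---whereas the paper leaves these as implicit; but the argument is the same.
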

\begin{proof}
Let $I\subset X\subset E$ with $I\in\mathcal{I}$. If $X\in \mathcal{I}$, then $X$ is the maximal element we are looking for. Otherwise, there exists $\varphi\in W$ such that $\trop (\varphi )\subset X$. Then, by lemma \ref{DimensionDeSubsepacioDeVectoresComplementarios}, the dimension of the subspace of $K^s$ generated by $\{u^{(j)}\::\: j\notin I\}$ is $s$ and the dimension of the subspace of $K^s$ generated by $\{u^{(j)}\::\: j\notin X\}$ is $r$ with $r<s$. Then, there exist $\{i_{r+1},\ldots, i_s\}\subset  X\setminus I$ such that $\{ u^{(i_{r+1})}, \ldots ,u^{(i_{s})}\}$ together with $\{u^{(j)}\::\: j\notin X\}$ generate $K^s$.

If $\bar{I}:= X\setminus \{ u^{(i_{r+1})}, \ldots ,u^{(i_{s})}\}$ then  $\bar{I}$ is a maximal element of 
$\{I'\in \mathcal{I}\::\:I\subset I'\subset X\}$.
\end{proof}
 
\begin{theorem}\label{Teorema: es un matroide}
Let $\{0\}\neq W\subset K^E$ be a finite-dimensional $K$-vector subspace. Then $M(W)=(E,\mathcal{C}(W))$ is a matroid.
Moreover, any element of $\trop (W)$ is a union of circuits.
\end{theorem}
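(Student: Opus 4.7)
The proof is essentially an assembly of the preparatory lemmas: the bulk of the work has already been done, so the plan is to check the four circuit axioms of Definition~\ref{Def_CIM} for $\mathcal{C}(W)$ and then read off the ``union of circuits'' statement from Lemma~\ref{ExisteMinimalQueContieneElemento}.

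First I would verify axioms (i) and (ii) directly from Definition~\ref{def_matroid_min_supp}: the empty set is excluded by construction, and the elements of $\mathcal{C}(W)$ are minimal with respect to inclusion, so no one is contained in another. Non-emptiness of $\mathcal{C}(W)$ (which is implicit in a non-degenerate matroid statement) follows from Lemma~\ref{lem:Cnonempty}, applied to any nonzero $\varphi\in W$ (which exists since $W\neq\{0\}$). Axiom (iii), the circuit elimination axiom, is exactly the content of Lemma~\ref{lem:3}, and axiom (iv), the existence of maximal independent sets in any interval, is exactly Lemma~\ref{lem:4}. So the first sentence of the theorem follows.

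For the second sentence, let $S\in\trop(W)$; if $S=\emptyset$ then it is the empty union of circuits and there is nothing to prove, so assume $S=\trop(\phi)$ for some $0\neq\phi\in W$. By Lemma~\ref{ExisteMinimalQueContieneElemento}, for each $z\in S$ there exists a circuit $C_z\in\mathcal{C}(W)$ with $z\in C_z\subset S$. Then
\[
S \;=\; \bigcup_{z\in S} \{z\} \;\subset\; \bigcup_{z\in S} C_z \;\subset\; S,
\]
so $S=\bigcup_{z\in S} C_z$ is a union of circuits, as required.

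There is no real obstacle here — all four axioms have been isolated as separate lemmas, and the ``union of circuits'' claim is an immediate consequence of Lemma~\ref{ExisteMinimalQueContieneElemento}. The only thing I would be careful about is the degenerate case where $W$ is one-dimensional (covered by the remark following Lemma~\ref{lem:Cnonempty}, where $\mathcal{C}(W)$ is a single set) to make sure axioms (iii) and (iv) still apply trivially; but the statements of Lemmas~\ref{lem:3} and~\ref{lem:4} do not require $\dim W\geq 2$, so no separate argument is needed.
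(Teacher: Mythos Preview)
Your proof is correct and follows essentially the same route as the paper: axioms (i) and (ii) come directly from the definition of $\mathcal{C}(W)$, axioms (iii) and (iv) are Lemmas~\ref{lem:3} and~\ref{lem:4}, and the ``union of circuits'' statement is deduced from Lemma~\ref{ExisteMinimalQueContieneElemento}. The only difference is that you spell out the union-of-circuits argument and the degenerate case more explicitly than the paper does.
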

\begin{proof}
Items (i) and (ii) are fulfilled as stated after Definition~\ref{def_matroid_min_supp}, and Lemmata~\ref{lem:3} and~\ref{lem:4} show the remaining items. The fact that any element of $\trop (W)$ is a union of circuits follows from Lemma~\ref{ExisteMinimalQueContieneElemento}.
\end{proof}

The above result is closely related to \cite[Theorem 4.12]{AB2015} where the language of thin sum matroids is used, for more details see \S\ref{sec:thin sums}.

Recall from Example \ref{ex:nice_example} that the set $\mathcal{C}(W)$ may consist of infinitely many elements.  Nevertheless, the matroid $(E,\mathcal{C}(W))$ has some finite structure in the following sense.

\begin{theorem}\label{thm-tame}
The matroid $M=(E,\mathcal{C}(W))$ is cofinitary. In particular, $M$ is tame.
\end{theorem}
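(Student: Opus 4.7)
The plan is to identify the cocircuits of $M=(E,\mathcal{C}(W))$ explicitly and to show that each of them has at most $s+1$ elements, where $s=\dim_K W$. This will yield cofinitarity, and the ``in particular'' statement will then follow from the result of~\cite{AB2015} cited above.

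First I would describe the bases of $M(W)$ concretely in terms of the column vectors $u^{(j)}\in K^s$ attached to a fixed $K$-basis $\varphi_1,\ldots,\varphi_s$ of $W$ (the same setup used before Lemma~\ref{DimensionDeSubsepacioDeVectoresComplementarios}). By Lemma~\ref{DimensionDeSubsepacioDeVectoresComplementarios}, a subset $I\subset E$ is independent in $M(W)$ iff $\{u^{(j)}:j\notin I\}$ spans $K^s$, so the bases of $M(W)$ are exactly the complements $E\setminus J$ where $J\subset E$ has cardinality $s$ and $\{u^{(j)}:j\in J\}$ is a basis of $K^s$. Such $J$ exist because the linear independence of $\varphi_1,\ldots,\varphi_s$ in $K^E$ forces the whole family $\{u^{(j)}:j\in E\}$ to span $K^s$.

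Next I would invoke \cite[Theorem 3.1]{Infinte_matroids} to form the dual matroid $M^*=(E,\mathcal{B}^*)$, whose bases are then $\mathcal{B}^*=\{J\subset E:|J|=s \text{ and } \{u^{(j)}:j\in J\} \text{ is a basis of } K^s\}$. A subset $I\subset E$ is independent in $M^*$ iff it is contained in some element of $\mathcal{B}^*$; since $\{u^{(j)}:j\in E\}$ spans $K^s$, every linearly independent subfamily can be completed to a basis of $K^s$, and this condition is equivalent to $\{u^{(j)}:j\in I\}$ being $K$-linearly independent. Therefore the circuits of $M^*$, that is, the cocircuits of $M$, are exactly the minimal $K$-linearly dependent subsets of $\{u^{(j)}:j\in E\}$.

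Finally, since $\dim_K K^s=s$, any $s+1$ vectors in $K^s$ are linearly dependent, so every minimal $K$-linearly dependent subset of $\{u^{(j)}:j\in E\}$ has at most $s+1$ elements. Consequently, every cocircuit of $M$ has cardinality at most $s+1$, which proves that $M$ is cofinitary, and hence tame by~\cite{AB2015}. The main subtlety is translating independence in the dual matroid into linear independence of the associated vectors $u^{(j)}$; once that translation is in place, the uniform bound on cocircuit size is immediate from finite-dimensional linear algebra.
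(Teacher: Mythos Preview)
Your argument is correct and follows essentially the same route as the paper: use Lemma~\ref{DimensionDeSubsepacioDeVectoresComplementarios} to identify the bases of $M$ as the complements $E\setminus\{j_1,\ldots,j_s\}$ with $\{u^{(j_1)},\ldots,u^{(j_s)}\}$ a basis of $K^s$, dualize, and conclude that cocircuits have at most $s+1$ elements. Your write-up is slightly more detailed (you justify the existence of such $J$ and spell out the translation of $M^*$-independence into linear independence of the $u^{(j)}$), but the core idea is identical.
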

\begin{proof}
By Lemma~\ref{DimensionDeSubsepacioDeVectoresComplementarios}, we have that $X\subset E$ is independent if and only if the $K$-linear subspace generated by $\{u^{(j)}\}_{j\notin X}$ is $K^s$, which means that there exist $\{j_1,\ldots,j_s\}\subset E\setminus X$ such that $\{u^{(j_1)},\ldots,u^{(j_s)}\}$ is a linearly independent set.
Then the bases of the matroid are exactly $\mathcal{B}_M=\{E\setminus\{j_1,\ldots,j_s\}\::\:Span\{u^{(j_1)},\ldots,u^{(j_s)}\}=K^s\}$, and $\mathcal{B}_M^*=\{\{j_1,\ldots,j_s\}\::\:Span\{u^{(j_1)},\ldots,u^{(j_s)}\}=K^s\}$.
Thus, the circuits of the dual matroid will have at most $s+1$ elements.
\end{proof}

\begin{remark}
\label{rem:rep}
The fact that $\mathcal{B}_M^*=\{\{j_1,\ldots,j_s\}\::\:Span\{u^{(j_1)},\ldots,u^{(j_s)}\}=K^s\}$ says that the dual $M^*$ of $M=(E,\mathcal{C}(W))$ is precisely the  matroid defined by the linear independence of the family of vectors $\{u^{(j)}\::\:j\in E\}$, see~\cite[Definition 2.6]{AB2015}.

A matroid defined by linear independence of a family of vectors is always finitary (even if the space spanned by the vectors is infinite-dimensional, see \cite[p. 2]{AB2015}, \cite[Section 2.6]{Infinte_matroids}). Thus, our original matroid $(E,\mathcal{C}(W))=M=(M^*)^*$ will in general not be representable.
\end{remark}

{\subsection{Relationship with thin sums matroids}\label{sec:thin sums}

Again, take a basis $\{ \varphi_1,\ldots ,\varphi_s\} \subset W\subset K^E$ of $W$ as a $K$-vector space of the form $\varphi_i = (a_{ij})_{j\in E}$.  In Remark \ref{rem:rep}, we showed that the dual of our matroid $M=M(W)=(E,\mathcal{C}(W))$ is the matroid $N=M^*$ represented by the family $\{u^{(j)}\::\:j\in E\}\subset K^s$ where $u^{(j)}:= (a_{1j},\ldots, a_{sj})$. 

Having this in mind, it is possible to use the theory of thin  families and their associated thin sums matroids to give a much shorter proof of the fact that $M$ is a matroid. The necessary concepts and constructions are rather technical, and the interested reader can find the details in \cite{AB2015}. But since the proof is rather short, we include it here as an alternative.

The dual $N^*=M$ arises as a thin sums matroid over a thin family for the field $K$ by \cite[Theorem 1.2]{AB2015}, and \cite[Theorem 3.3]{AB2015} gives an explicit way to compute $N^*$: let $C$ be the family of all linear dependencies of the family $\{u^{(j)}\::\:j\in E\}$, which in this case consists of all the finitely supported elements $c$ of $K^E$ which are orthogonal to the set $\{ \varphi_1,\ldots ,\varphi_s\}$. Then  $p:E\xrightarrow{}K^C$ defined by $p(e)(c):=c(e)$ is a thin family, and the corresponding thin sums matroid $M_{ts}(p)$ is $N^*$. 

 If $v\in K^E$ and $F\subset E$, we denote by  $v|_F$ the element of $K^E$ having coordinates $(v|_F)_e=v_e$ if $e\in F$, and  $(v|_F)_e=0$ otherwise. Now we have the following result.

\begin{lemma}
The set of vectors $v\in K^E$ which are orthogonal to all $c\in C$ is precisely  $W$.
\end{lemma}
\begin{proof}
Since $\{ \varphi_1,\ldots ,\varphi_s\}$ generate $W$ and are orthogonal to all the elements of $C$ by definition, then all the elements of $W$ are also orthogonal to all the elements of $C$.

Conversely, let $v\in K^E$ be an orthogonal vector to all the elements of $C$, and let $F\subset E$ be a finite set large enough that the set $\{ \varphi_1|_F,\ldots ,\varphi_s|_F\}$  is linearly independent. Then $v|_F$ is orthogonal to every element orthogonal to the set $\{ \varphi_1|_F,\ldots ,\varphi_s|_F\}$, and so we have an expression $v|_F=\sum_il_i\varphi_i|_F$ for uniquely determined $l_1,\ldots,l_s\in K$ since $\{ \varphi_1|_F,\ldots ,\varphi_s|_F\}$ is linearly independent. 

Given $e\in E\setminus F$, we know that $v|_{F\cup\{e\}}$ is orthogonal to every element orthogonal to the set $\{ \varphi_1|_{F\cup\{e\}},\ldots ,\varphi_s|_{F\cup\{e\}}\}$, and so we have an expression $v|_{F\cup\{e\}}=\sum_il_i'\varphi_i|_{F\cup\{e\}}$. Now we restrict this expression to $F\subset F\cup\{e\}$ to find that $l_i'=l_i$ for all $i$, and since this is true for every $e\in E\setminus F$, we deduce that $v=\sum_il_i\varphi_i\in W$.
\end{proof}

The previous Lemma shows that the set of thin dependencies $D_p$ of the thin family $p$ is precisely $W$, now \cite[Theorem 3.4]{AB2015} says that $M_{ts}(p)=M^*(\overline{p})$, where $\overline{p}:E\xrightarrow[]{}k^W$ is defined by $\overline{p}(e)(v)=v(e)$ for $e\in E$ and $v\in W$.


}

\section{Scrawls and cardinality} 
\label{Section:Scrawls}
In the previous section we have shown that the minimal elements of $\trop (W)$ satisfy the axioms of circuits. The scrawls of the matroid on $E$ given in terms of these circuits are unions of circuits. In this section we investigate the conditions on $E$ and $W$ upon which the collection of scrawls coincide with $\trop (W)$. 
For this purpose, let us denote by $\# C$ the cardinality of a set $C$ and by $S(C) = C \cup \{C\}$ the successor-set. 
Note that $\# S(C) = \# C+1$ for finite sets $C$ and $\# S(C) = \# C$ if $C$ is infinite.

\subsection{Matroids of scrawls}
Denote by $\lin (W)$ the set of $K$-linear subspaces $L\subset K^s$ with $L\neq K^s$ such that $L$ is generated by a set of the form $\{u^{(i)}\}_{i\in X}$ for some $X\subset E$. 
We will denote by $\Psi_W$ the  map given by
\begin{equation}\label{PsiW}
\begin{array}{cccc}
\Psi_W: & \lin (W) & \longrightarrow & 2^E\\
        & L &\mapsto & \{i\in E \::\: u^{(i)}\notin L\}.
\end{array}
\end{equation}

Notice that $\lin(W)$ is in fact independent of the choice of basis $\{\varphi_1,\dots,\varphi_s\}$. To see this consider another basis $\{\psi_1,\dots,\psi_s\}$ of $W$ and let $\psi_i=(b_{ij})_{j\in E}$. 
Denote for $j\in E$, $w^{(j)}:=(b_{1j},\dots,b_{sj})$.
Then there exists an invertible $s\times s$ matrix $\lambda=(\lambda_{ij})_{1\le i,j\le s}$ encoding the  change of basis: $\psi_i=\sum \lambda_{ij}\varphi_j$. In particular, for all $j\in E$ we have
\[
\lambda u^{(j)}=w^{(j)}.
\]
Now consider $X\subset E$ and $L=\langle u^{(j)}\::\: j\in X\rangle$. We have
\[
\langle u^{(i)}\rangle_{i\in X} = \langle \lambda^{-1} w^{(i)} \rangle_{i\in X} = \langle u^{(j)}\rangle_{j\in \mathcal T(\{\lambda^{-1}w^{(i)}\}_{i\in X})}.
\]
So the base change matrix $\lambda$ induces a natural bijection between the linear spaces generated by subsets of $\{u^{(i)}\}_{i\in E}$ and those of $\{w^{(i)}\}_{i\in E}$.

\begin{lemma}\label{La correspondencia entre circuitos y codimension uno}
The morphism $\Psi_W$ induces a one to one correspondence between the circuits of the matroid induced by $W$ and the spaces of codimension one in $\lin W$.
\end{lemma}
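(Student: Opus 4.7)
The plan is to prove the lemma by showing that $\Psi_W$ restricts to the desired bijection, using Lemma~\ref{condiciones para ser minimal} as the central tool. Recall from that lemma that a subset $C\subset E$ is a circuit of $M(W)$ if and only if $L_C:=\langle u^{(i)}\rangle_{i\notin C}$ has codimension one in $K^s$ and $u^{(i)}\notin L_C$ for every $i\in C$. So the natural candidate inverse to $\Psi_W$ on codimension-one subspaces is $C\mapsto L_C$.

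First I would show surjectivity, i.e.\ that every circuit arises as $\Psi_W(L)$ for some codimension-one $L\in\lin(W)$. Given $C\in\mathcal{C}(W)$, set $L:=L_C=\langle u^{(i)}\rangle_{i\notin C}$. By Lemma~\ref{condiciones para ser minimal}, $L$ has codimension one and $u^{(i)}\notin L$ exactly when $i\in C$. Therefore $L\in\lin(W)$ and $\Psi_W(L)=\{i\in E : u^{(i)}\notin L\}=C$.

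Next I would establish injectivity by verifying that for any codimension-one $L\in\lin(W)$, we have $L=L_{\Psi_W(L)}$; from this, together with surjectivity, $\Psi_W$ will restrict to a bijection. Put $C:=\Psi_W(L)$. By definition of $\lin(W)$, there is some $X\subset E$ with $L=\langle u^{(i)}\rangle_{i\in X}$. For every $i\in X$ we have $u^{(i)}\in L$, hence $i\notin C$, so $X\subset E\setminus C$. This gives
\[
L=\langle u^{(i)}\rangle_{i\in X}\subset \langle u^{(i)}\rangle_{i\notin C}= L_C,
\]
while the reverse inclusion $L_C\subset L$ is immediate since $u^{(i)}\in L$ for every $i\notin C$ by definition of $C$. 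Hence $L=L_C$, and since this $L$ has codimension one and $u^{(i)}\notin L$ for every $i\in C$, Lemma~\ref{condiciones para ser minimal} also guarantees $C\in\mathcal{C}(W)$, so $\Psi_W$ really lands in the circuits.

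I expect the only subtle point to be the observation that membership $L\in\lin(W)$, i.e.\ the fact that $L$ is generated by some family $\{u^{(i)}\}_{i\in X}$, is essential to obtain the equality $L=L_{\Psi_W(L)}$ in the injectivity step; without this, $L$ could simply lie in $L_{\Psi_W(L)}$ as a strict subspace. Everything else is a direct bookkeeping exercise once Lemma~\ref{condiciones para ser minimal} has been invoked.
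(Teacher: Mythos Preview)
Your proposal is correct and follows essentially the same approach as the paper, which simply states that the lemma is a direct consequence of Lemma~\ref{condiciones para ser minimal}. You have unpacked this into explicit surjectivity and injectivity arguments, correctly identifying $C\mapsto L_C$ as the inverse and noting the role of the generating hypothesis in the definition of $\lin(W)$; the paper leaves all of this implicit.
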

\begin{proof}
This is a direct consequence of Lemma~\ref{condiciones para ser minimal}.
\end{proof}

\begin{lemma}\label{Si es uno uno es cerrado por union}
The collection $\trop (W)\subset 2^E$ is closed under union if and only if $\Psi_W$ is a natural one to one correspondence between $\lin (W)$ and $\trop (W)$.
\end{lemma}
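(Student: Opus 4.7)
Before handling either implication I would establish two observations that hold independently of any hypothesis. First, $\Psi_W$ is automatically injective: if $\Psi_W(L_1)=\Psi_W(L_2)$, then the sets $X_k:=\{j\in E:u^{(j)}\in L_k\}$ coincide, and since by definition each $L_k$ is generated by some subfamily of $\{u^{(i)}\}_{i\in E}$, it must equal $\Span\{u^{(j)}\}_{j\in X_k}$, forcing $L_1=L_2$. Second, $\trop(W)\setminus\{\emptyset\}$ is always contained in the image of $\Psi_W$: given $S=\trop(\phi)$ with $0\neq\phi\in W$, Lemma~\ref{Columnas de soporte de funcion} shows that $L:=\Span\{u^{(i)}\}_{i\notin S}$ is a proper subspace satisfying $u^{(i)}\notin L$ precisely when $i\in S$, so $L\in\lin(W)$ and $\Psi_W(L)=S$. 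With these in place, only the image issue on each side remains.

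For the forward direction I would assume $\trop(W)$ closed under union and show $\Psi_W(L)\in\trop(W)$ for every $L\in\lin(W)$. The plan is to verify the pointwise identity
\[
\Psi_W(L)\;=\;\bigcup_{\substack{\lambda\in K^s\setminus\{0\}\\ \lambda\cdot u^{(j)}=0\ \forall\, u^{(j)}\in L}}\trop(\lambda\cdot\underline{\varphi}).
\]
Inclusion ``$\supseteq$'' is immediate from~\eqref{SoporteLambdaVarphi}, and ``$\subseteq$'' follows because for any $u^{(i)}\notin L$ the strict inclusion $L\subsetneq L+\langle u^{(i)}\rangle$ produces a linear functional on $K^s$ vanishing on $L$ but not on $u^{(i)}$. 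Each term on the right is the support of an element of $W$, so closure under union promotes their union to $\trop(W)$. Combined with the two observations above, this yields the desired bijection $\Psi_W\colon\lin(W)\to\trop(W)$.

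For the converse I would assume that $\Psi_W\colon\lin(W)\to\trop(W)$ is a bijection and take any family $\{S_\alpha\}_\alpha\subset\trop(W)$ with $S_\alpha=\Psi_W(L_\alpha)$. A direct computation gives the equivalence $j\in\bigcup_\alpha S_\alpha\Longleftrightarrow u^{(j)}\notin\bigcap_\alpha L_\alpha$, so it remains to realize $\bigcup_\alpha S_\alpha$ as $\Psi_W$ of some element of $\lin(W)$. The main obstacle is that the intersection $L_0:=\bigcap_\alpha L_\alpha$ may itself fail to be spanned by a subfamily of $\{u^{(i)}\}$ and thus need not belong to $\lin(W)$. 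The workaround is to pass instead to $L':=\Span\{u^{(i)}:u^{(i)}\in L_0\}\subset L_0$, which automatically lies in $\lin(W)$ (it is a subfamily-span, and proper because contained in the proper subspace $L_0$) and satisfies $\{j:u^{(j)}\in L'\}=\{j:u^{(j)}\in L_0\}$ by construction. Hence $\Psi_W(L')=\bigcup_\alpha S_\alpha$, and since $\Psi_W$ is valued in $\trop(W)$ by hypothesis we conclude $\bigcup_\alpha S_\alpha\in\trop(W)$. This replacement of $L_0$ by $L'$, and the verification that it does not alter the image under $\Psi_W$, is where the real content of the proof lies.
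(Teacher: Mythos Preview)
Your proof is correct and takes a genuinely different route from the paper's. For the forward direction the paper works through circuits: it argues that every $L\in\lin(W)$ is an intersection of codimension-one elements of $\lin(W)$, so $\Psi_W(L)$ is a union of circuits by Lemma~\ref{La correspondencia entre circuitos y codimension uno}, and then appeals to closure under union together with Theorem~\ref{Teorema: es un matroide}; you bypass this by exhibiting $\Psi_W(L)$ directly as $\bigcup_{\lambda\in L^\perp\setminus\{0\}}\trop(\lambda\cdot\underline{\varphi})$, which is more elementary and independent of the circuit machinery. For the converse the paper simply asserts that $\lin(W)$ is closed under intersection, from which closure of $\Psi_W(\lin(W))=\trop(W)$ under union would follow immediately; your observation that $\bigcap_\alpha L_\alpha$ need not be spanned by a subfamily of the $u^{(i)}$ (and hence need not lie in $\lin(W)$) is well taken---one can already see this with five vectors in $K^3$---and your replacement by $L'=\Span\{u^{(i)}:u^{(i)}\in\bigcap_\alpha L_\alpha\}$ is exactly the right repair. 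The paper's route has the merit of tying the lemma into the surrounding scrawl/circuit theory; yours has the merit of being self-contained and of handling the intersection issue honestly. Both arguments tacitly identify $\trop(W)$ with $\trop(W)\setminus\{\emptyset\}$, since $\emptyset=\trop(0)$ is never in the image of $\Psi_W$; you are more explicit about this than the paper.
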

\begin{proof}
Lemma~\ref{Columnas de soporte de funcion} implies that $\trop (W) \subset \Psi_W (\lin (W)) $.

Now $\lin (W)$ is closed under intersection and $\Psi_W(L\cap L')=\Psi_W (L)\cup \Psi_W (L')$. Then, $\Psi_W (\lin (W))$ is closed under union. All elements of $\lin (W)$ can be written as intersection of elements of codimension one. Then, all the elements in  $\Psi_W (\lin (W))$ may be written in terms of images of elements of codimension one. By Lemma~\ref{La correspondencia entre circuitos y codimension uno}, images of elements of codimension one are circuits. By Theorem~\ref{Teorema: es un matroide}, all elements of $\trop (W)$ are union of circuits.
\end{proof}

\begin{lemma}\label{ExisteSubsepacioDeDimensionUnoMas}
Let $L$ be a $K$-linear subspace of $K^s$ with $\dim_K(L)=d\leq s-2$ and let $\{ u^{(i)}\}_{i\in X}\subset K^s$ with $X\subset E$ be 
such that $\{ u^{(i)}\}_{i\in X}\cap L=\emptyset$. 
If $\# X <\# S(K)$, then there exists a $K$-linear subspace $\overline{L}\supset L$ of $K^s$ of dimension $d+1$ with $\{ u^{(i)}\}_{i\in X}\cap \overline{L}=\emptyset$.
\end{lemma}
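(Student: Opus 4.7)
The plan is to pass to the quotient $V := K^{s}/L$ and reduce the problem to finding a line avoiding a small family of prescribed lines. Since $\dim_K V = s-d \geq 2$, any $(d+1)$-dimensional subspace $\overline{L} \supset L$ corresponds bijectively under the quotient map $\pi : K^s \to V$ to a one-dimensional subspace $\pi(\overline{L}) \subset V$, and vice versa. For each $i \in X$ the class $\overline{u}^{(i)} := u^{(i)} + L$ is nonzero in $V$ (since $u^{(i)} \notin L$ by hypothesis), and the equivalence $u^{(i)} \in \overline{L} \Longleftrightarrow \overline{u}^{(i)} \in \pi(\overline{L})$ shows that this happens precisely when $\pi(\overline{L}) = K \cdot \overline{u}^{(i)}$. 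So the condition $\{u^{(i)}\}_{i \in X} \cap \overline{L} = \emptyset$ is equivalent to $\pi(\overline{L})$ being distinct from each of the at most $\#X$ lines $K \cdot \overline{u}^{(i)}$ in $V$.

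Next I would count the one-dimensional subspaces of $V \cong K^{s-d}$ with $s-d \geq 2$, and verify that this count is at least $\#S(K)$. If $K$ is finite with $\#K = q$, there are $(q^{s-d}-1)/(q-1) \geq q+1 = \#S(K)$ such lines. If $K$ is infinite, the distinct lines $K\cdot(1,a,0,\ldots,0)$ for $a \in K$ already provide $\#K = \#S(K)$ of them. In either case the set of one-dimensional subspaces of $V$ has cardinality at least $\#S(K)$.

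Combining the two steps: the assumption $\#X < \#S(K)$ means the at most $\#X$ forbidden lines $K\cdot \overline{u}^{(i)}$ cannot exhaust all one-dimensional subspaces of $V$, so we may choose a line $\ell \subset V$ with $\ell \neq K\overline{u}^{(i)}$ for every $i \in X$. Taking $\overline{L} := \pi^{-1}(\ell)$ produces a $(d+1)$-dimensional subspace of $K^s$ containing $L$ with $\{u^{(i)}\}_{i \in X} \cap \overline{L} = \emptyset$, as required.

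The only mildly delicate point is the cardinality bookkeeping for the line count, which must handle the finite and infinite cases uniformly through $\#S(K)$; everything else is a direct translation between subspaces containing $L$ and subspaces of the quotient.
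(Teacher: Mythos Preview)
Your proof is correct and follows essentially the same route as the paper: the paper identifies the set of $(d+1)$-dimensional subspaces containing $L$ with the projective space $\mathbb{P}_K^{s-d-1}$, which is exactly the set of lines in your quotient $V=K^s/L$, and then uses the same counting argument that this set has at least $\#S(K)$ elements while each $u^{(i)}$ rules out exactly one of them. Your presentation simply makes the quotient construction and the finite/infinite case distinction in the line count more explicit.
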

\begin{proof}
Let $A$ be the collection of $K$-linear subspaces of dimension $d+1$ of $K^s$ containing $L$. The collection $A$ is isomorphic to $\mathbb{P}_K^{s-d-1}$ where $\mathbb{P}_K$ denotes the projective space over $K$. 
Then the cardinality of $A$ is greater or equal to the cardinality of $S(K)$.
For every non-zero vector $u^{(i)} \notin L$, $i \in X$, since the dimension of $L$ is $d$, there exists exactly one $L_i \in A$ such that $u^{(i)}\in L_i$. Thus, if $\# X< \# S(K)$, there is $\overline{L} \in A \setminus \{L_i\}_{i \in X}$ and $\overline{L}$ does not contain any of the $u^{(i)} \notin {L}$.
\end{proof}

Let us note that for infinite $K$ the proof could be simplified by using $\#\mathbb{P}_K^{s-d-1}=\#\mathbb{P}_K=\#K$.

\begin{lemma}\label{CorrespondenciaSubespaciosSoporte}
Let $W$ be a subspace of $K^ E$ and let $\Psi_W$ be as in~\eqref{PsiW}. If $\# E < \# S(K)$, then $\Psi_W$ is a natural one to one correspondence between $\lin (W)$ and $\trop (W)$.
\end{lemma}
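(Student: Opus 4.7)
The plan is to prove that $\Psi_W$ is a bijection in three stages: injectivity, surjectivity onto $\trop (W)$, and well-definedness of $\Psi_W$ as a map into $\trop (W)$. The first two are formal and do not require the cardinality hypothesis. For injectivity, note that every $L \in \lin (W)$ equals $\Span \{u^{(i)} \::\: u^{(i)}\in L\}$ (since $L$ is spanned by some such subfamily, the maximal one spans it as well), so $L$ is determined by the complement $E \setminus \Psi_W(L)$. For surjectivity onto every nonempty $S \in \trop (W)$, writing $S = \trop (\phi)$ for $0 \ne \phi \in W$, Lemma~\ref{Columnas de soporte de funcion} places $L := \Span \{u^{(i)} \::\: i \notin S\}$ in $\lin (W)$ and gives $u^{(i)} \notin L$ for $i \in S$; the reverse containment $u^{(i)} \in L$ for $i \notin S$ is immediate, so $\Psi_W(L) = S$.

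The core step, and the only one using $\# E < \# S(K)$, is showing $\Psi_W(L) \in \trop (W)$ for every $L \in \lin (W)$. Set $d := \dim_K L$ and $X := \Psi_W(L)$. If $d = s-1$, then Lemma~\ref{condiciones para ser minimal} directly yields that $X$ is a circuit, hence $X \in \trop (W)$. If $d \le s-2$, I iterate Lemma~\ref{ExisteSubsepacioDeDimensionUnoMas} on $(L, X)$: since $\# X \le \# E < \# S(K)$, at each step the lemma extends the current subspace by one dimension while preserving the disjointness from $\{u^{(i)}\}_{i \in X}$. After $s-1-d$ steps I obtain a hyperplane $\hat L \supseteq L$ with $u^{(i)} \notin \hat L$ for every $i \in X$. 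Since $\hat L$ is a hyperplane, there exists a nonzero $\lambda \in K^s$ with $\hat L = \ker (\lambda \cdot )$. For $i \notin X$, $u^{(i)} \in L \subseteq \hat L$ gives $\lambda \cdot u^{(i)} = 0$; for $i \in X$, $u^{(i)} \notin \hat L$ gives $\lambda \cdot u^{(i)} \ne 0$. Thus $\phi := \lambda \cdot \underline{\varphi} \in W$ satisfies $\trop (\phi ) = X = \Psi_W(L)$ by~\eqref{SoporteLambdaVarphi}, so $\Psi_W(L) \in \trop (W)$.

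The main obstacle is organising this iterated extension correctly: one must verify that the avoidance set $X$ does not need to be updated along the chain of extensions and that each intermediate subspace still has codimension at least $2$ until the final step reaches codimension $1$. Both of these checks are immediate from the precise formulation of Lemma~\ref{ExisteSubsepacioDeDimensionUnoMas}, so no fresh ideas beyond the previous lemmas are required. Combining the three stages yields the claimed bijection, which is natural in the basis-free sense already established by the change-of-basis discussion preceding Lemma~\ref{La correspondencia entre circuitos y codimension uno}.
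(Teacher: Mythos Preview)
Your proposal is correct and follows essentially the same route as the paper: the paper also splits into well-definedness (handled via Lemma~\ref{condiciones para ser minimal} in codimension one and by iterating Lemma~\ref{ExisteSubsepacioDeDimensionUnoMas} otherwise), declares injectivity ``straightforward'', and for surjectivity takes $L$ to be the span of $\{u^{(i)}\}_{i\notin\trop(\phi)}$. Your write-up is slightly more explicit about injectivity and about why the avoidance set $X$ stays fixed during the iteration, but the argument is the same.
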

\begin{proof}
We start by showing that, for $L\in\lin (W)$, $\Psi_W (L)$ is in $\trop (W)$.
\begin{enumerate}
    \item If $L$ is of codimension one it is a consequence of Lemma~\ref{condiciones para ser minimal} (with no assumption about cardinality).
    \item Suppose that the dimension of $L$ is $d<s-1$. 
    Then the cardinality of $X:= \{ i\in E \::\: u^{(i)}\notin L\} $ satisfies $\#S^d(X)\leq \#E$ where $S^d$ denotes the $d$-th successor-set. 
    Applying $s-d-1$ times Lemma~\ref{ExisteSubsepacioDeDimensionUnoMas} , there exists a $K$-linear hyperplane $\overline{L}\supset L$ that does not contain any element of $\{u^{(i)}\}_{i\in X}$. 
    Take $\lambda\in K^s$ such that $\overline{L}= \{ v\in K^s \::\: \lambda\cdot v =0\}$. Since $L\subset \overline{L}$ and $\{u^{(i)}\}_{i\in E, u^{(i)}\notin L}\cap \overline{L} =\emptyset$ we have that $\lambda\cdot u^{(i)}=0$ for all $u^{(i)}\in L$ and $\lambda\cdot u^{(i)}\neq 0$ for all $u^{(i)}\notin L$. 
    Then, by~\eqref{SoporteLambdaVarphi}, we have that $\trop (\lambda\cdot \underline{\varphi}) =\Psi_W (L)$.
    \end{enumerate}
That the mapping is injective is straightforward and, to show surjectivity it is enough to see that $\trop (\phi)= \Psi_W (L)$ for any $\phi\in W$, where $L$ is the subspace generated by $\{ u^{(i)}\}_{i\in E\setminus\trop (\phi )}$.  
\end{proof}

\begin{theorem}\label{Teorema Son los Scrawls}
Let $W$ be a subspace of $K^E$. If $\# E < \# S(K)$, then $\trop (W)$ is a set of scrawls for $E$.
\end{theorem}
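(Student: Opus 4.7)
The plan is to identify $\trop(W)$ with the set of unions of circuits of the matroid $M(W)=(E,\mathcal{C}(W))$ constructed in Theorem~\ref{Teorema: es un matroide}, and then invoke the definition of scrawls as unions of circuits.

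First I would recall the setup: by Theorem~\ref{Teorema: es un matroide}, $M(W)=(E,\mathcal{C}(W))$ is a matroid, so axioms (iii) and (iv) of Definition~\ref{Def_CIM} are already satisfied by its circuits $\mathcal{C}(W)$. Since the scrawls of a matroid are defined as the unions of its circuits (see \cite[Section 2.2]{BowlerCarmesin:2018}), the statement reduces to showing the equality
\begin{equation*}
\trop(W) \;=\; \Bigl\{\,\bigcup_{C\in\mathcal{F}} C \;:\; \mathcal{F}\subset \mathcal{C}(W)\Bigr\}.
\end{equation*}

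The inclusion ``$\supseteq$'' is where the cardinality hypothesis enters. Every circuit lies in $\trop(W)$ by Definition~\ref{def_matroid_min_supp}, so it suffices to show that $\trop(W)$ is closed under arbitrary unions. Under the assumption $\#E<\#S(K)$, Lemma~\ref{CorrespondenciaSubespaciosSoporte} gives that $\Psi_W:\lin(W)\to\trop(W)$ is a bijection, and then Lemma~\ref{Si es uno uno es cerrado por union} (whose proof used the identity $\Psi_W(L\cap L')=\Psi_W(L)\cup\Psi_W(L')$ together with the fact that $\lin(W)$ is closed under intersection) implies that $\trop(W)$ is closed under union. Thus every union of circuits indeed lies in $\trop(W)$.

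The inclusion ``$\subseteq$'' is immediate and does not require the cardinality hypothesis: it is the final assertion of Theorem~\ref{Teorema: es un matroide}, namely that any element of $\trop(W)$ is a union of circuits (this followed from Lemma~\ref{ExisteMinimalQueContieneElemento}, which produces a minimal-support element below each given nonzero vector at any point of its support).

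Combining the two inclusions, $\trop(W)$ equals the set of unions of circuits of $M(W)$, i.e. the set of scrawls of the matroid $(E,\mathcal{C}(W))$. Hence $\trop(W)$ is itself a set of scrawls on $E$ in the sense of Definition~\ref{def_trop_semigp}, since it inherits conditions (iii) and (iv) from $\mathcal{C}(W)\subset\trop(W)$ via the matroid structure already established. The only nontrivial ingredient is the closure under union, which is precisely where the hypothesis $\#E<\#S(K)$ is used; I do not expect any further obstacle, as everything else is packaged in the preceding lemmata.
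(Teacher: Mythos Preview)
Your proof is correct and follows essentially the same approach as the paper: both directions rely on the same ingredients, namely Theorem~\ref{Teorema: es un matroide} for the inclusion $\trop(W)\subseteq\{\text{unions of circuits}\}$ and the combination of Lemma~\ref{CorrespondenciaSubespaciosSoporte} with Lemma~\ref{Si es uno uno es cerrado por union} for the reverse inclusion. Your additional remark that $\trop(W)$ inherits axioms (iii) and (iv) from the matroid $M(W)$ makes explicit what the paper leaves implicit in invoking Definition~\ref{def_trop_semigp}.
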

\begin{proof}
By Theorem~\ref{Teorema: es un matroide}, any element of $\trop (W)$ is a union of circuits. That any union of circuits is in $\trop (W)$ is a consequence of Lemma~\ref{CorrespondenciaSubespaciosSoporte} together with Lemma~\ref{Si es uno uno es cerrado por union}.
\end{proof}

The following example shows that the condition in Theorem~\ref{Teorema Son los Scrawls} about cardinality is optimal.

\begin{example}\label{Ejemplo de cuando la cardinalidad no es suficiente}
Let $E$ be a finite set and let $K$ be a finite field with $\# K< \# E$. 
Let $i_0\in E$ and set $a_{i_0}:=1$. Choose, for $i \in E$, the $a_i\in K$ such that $K=\{a_i\}_{i\in E \setminus \{i_0\}}$. 
Let $i_1\in E$  be such that $a_{i_1}=0$.
Set $\varphi_1:= \{ b_i\}_{i\in E}\in K^E$ where $b_i:=1$ for $i\neq i_0$ and $b_{i_0}=0$ and set $\varphi_{2}:= \{ a_i\}_{i\in E}\in K^E$ and let $W$ be the subspace of $K^E$ spanned by $\varphi_1$ and $\varphi_2$.

In this case, the image of the mapping~\eqref{PsiW} is not contained in $\trop (W)$ since $\Psi (\{ (0,\ldots ,0)\}) = E$ and $E$ is not the support of any element of $W$. 
In order to see this, take an element $\phi:= \lambda_1\varphi_1+\lambda_2\varphi_2\in W$. 
If $\lambda_1 \lambda_2=0$ then either $i_0\notin \trop (\phi)$ or $i_1\notin \trop (\phi)$. If $\lambda_1\lambda_2\neq 0$, let $i\in E$ be such that $a_i= -\frac{\lambda_1}{\lambda_2}$ (it exists because $K=\{ a_i\}_{i\in E \setminus \{i_0\}}$). Since $\trop (\phi)= \trop (\frac{1}{\lambda_2}\phi )= \trop (\frac{\lambda_1}{\lambda_2}\varphi_1+\varphi_2)= \trop ( \{\frac{\lambda_1}{\lambda_2}+ a_i\}_{i\in E} )\cup \{ i_0\}$ we have that $i\notin \trop (\phi)$.
\end{example}

Note that if $K$, $E$ and $W\subset K^ E$ satisfy the conditions of Theorem~\ref{Teorema Son los Scrawls}, then $\trop (W)$ is in particular a semigroup.
 
\begin{corollary}
\label{cor:new_result}
Let $K$ and $E$ be arbitrary. Let $s\in\mathbb{N}$, for each $j\in E$, set $u^{(j)}:= (a_{1j},\ldots, a_{sj})\in K^s$, such that $\{\varphi_i = (a_{ij})_{j\in E}\::\:i=1,\ldots,s\}$ is linearly independent. 

Let $M$ be the representable matroid induced by the family $\{u^{(j)}\::\:j\in E\}$. Then $M^*$ is the matroid of scrawls of $LinSpan\{\varphi_1,\ldots ,\varphi_s\}$ if $\#E< \# S(K)$.
\end{corollary}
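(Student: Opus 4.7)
The plan is to exhibit this corollary as a direct synthesis of Theorem~\ref{Teorema Son los Scrawls} and Remark~\ref{rem:rep}, with no new input beyond correctly identifying which matroid is playing which role. First I would set $W := \mathrm{LinSpan}\{\varphi_1,\ldots,\varphi_s\}\subset K^E$; by hypothesis $\varphi_1,\ldots,\varphi_s$ are linearly independent, so $W$ is an $s$-dimensional $K$-vector subspace and Theorem~\ref{Teorema: es un matroide} applies, producing the matroid $M(W) = (E,\mathcal{C}(W))$ whose circuits are the minimal nonempty supports in $\trop(W)$.

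Next I would invoke Remark~\ref{rem:rep}, which identifies the dual matroid $M(W)^*$ with the representable matroid on $E$ associated to the family $\{u^{(j)}\::\: j\in E\}$; the relevant formula $\mathcal{B}_{M(W)}^* = \{\{j_1,\ldots,j_s\}\::\:\mathrm{Span}\{u^{(j_1)},\ldots,u^{(j_s)}\}=K^s\}$ is precisely the one provided in the proof of Theorem~\ref{thm-tame}. Since $M$ is, by the statement of the corollary, the representable matroid induced by $\{u^{(j)}\::\: j\in E\}$, this gives $M = M(W)^*$, and hence $M^* = M(W)^{**} = M(W)$ (using that the dual of the dual is the original matroid, as recorded in \cite[Theorem 3.1]{Infinte_matroids}).

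Finally, under the cardinality hypothesis $\#E < \#S(K)$, Theorem~\ref{Teorema Son los Scrawls} tells us that $\trop(W)$ is a set of scrawls on $E$, and by Lemma~\ref{Si es uno uno es cerrado por union} together with Theorem~\ref{Teorema: es un matroide} these scrawls are precisely the unions of the circuits in $\mathcal{C}(W)$. Therefore $\trop(W)$ is the set of scrawls of $M(W) = M^*$, which is the stated conclusion.

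The potential subtlety, and the only place any care is needed, is the bookkeeping of duality: making sure we do not confuse $M$ with $M^*$ in the final identification. Once the dictionary $M^* = M(W)$ is set up cleanly, everything else is a direct citation of the previously established results, so I expect no genuine obstacle in writing this up.
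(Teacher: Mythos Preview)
Your proposal is correct and follows essentially the same route as the paper's proof: set $W=\mathrm{LinSpan}\{\varphi_1,\ldots,\varphi_s\}$, identify the representable matroid $M$ with $M(W)^*$ via the description of $\mathcal{B}_{M(W)}^*$ from the proof of Theorem~\ref{thm-tame} (equivalently Remark~\ref{rem:rep}), dualize to get $M^*=M(W)$, and then apply Theorem~\ref{Teorema Son los Scrawls} under the cardinality hypothesis. Your write-up is in fact more explicit than the paper's about the double-dual step; the only minor redundancy is the separate citation of Lemma~\ref{Si es uno uno es cerrado por union}, since Theorem~\ref{Teorema Son los Scrawls} already packages that conclusion.
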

\begin{proof}
Let $W=LinSpan\{\varphi_1,\ldots ,\varphi_s\}$. By Theorem \ref{thm-tame}, if $M$ is the representable matroid induced by the family $\{u^{(j)}\::\:j\in E\}$, then $M^*$ is its matroid of scrawls, and $M^*=\trop(W)$ if $\#E < \# S(K)$ by Theorem \ref{Teorema Son los Scrawls}.
\end{proof}

\begin{remark}
We have that Corollary~\ref{cor:new_result} says that even if a cofinitary matroid $M$ is representable by a family of column vectors, it does not follow automatically that its dual $M^*$ is the matroid of scrawls of the linear span of the row vectors.
\end{remark}

\begin{remark}
\label{rem:trop_lin}
Our concept of set of scrawls is stronger than that of a semigroup of $(2^E,\cup)$, since it is clear that a semigroup, being closed under unions, is spanned by its minimal elements, but it does not necessarily follow that this set of minimal elements satisfy the axioms of the circuits of a matroid. The concept of set of scrawls is also stronger than that of the circuits of a matroid on $2^E$, since it may happen that $a,b\in \mathcal{C}(W)$, but $a\cup b\notin \trop (W)$ (see Theorem~\ref{Teorema Son los Scrawls}).
\end{remark}

\section{Tropical linear spaces in the differential algebra setting}
\label{section:diff_tls}
We apply the previous theory to the case in which the set of formal solutions of a homogeneous system of linear differential equations is a finite dimensional vector space. 
The dimension of such solution spaces could be stated with the usage of D-modules~\cite{sattelberger2019d} or jet-spaces~\cite{kruglikov2006dimension}. 
When considering differential equations with polynomial coefficients instead of formal power series coefficients, one would speak of D-finite solutions~\cite{lipshitz1989d}. 
In this paper, however, we give a presentation by using the so-called differential type applicable to every system of algebraic (non-linear) differential polynomials as those considered in the \textsc{fttdag}~\cite{falkensteiner2023initials}. 

Throughout this section, we will consider $K$ to be a field of characteristic zero and $m,n \in \mathbb{N} \setminus \{0\}$.

\subsection{Preliminaries on differential algebra}
We denote by ${K}[\![T]\!]={K}[\![t_1,\ldots,t_m]\!]$ the ring of multivariate formal power series, by $D=\{\tfrac{\partial}{\partial t_1},\ldots, \tfrac{\partial}{\partial t_m}\}$ the set of standard partial derivatives, and we denote for $J\in\mathbb{N}^m$ the differential operator $\Theta(J)=\frac{\partial^{|J|}}{\partial t_{1}^{j_1} \cdots \partial t_m^{j_m}}$.
We denote by ${K}_{m,n}$ the polynomial ring ${K}[\![T]\!][x_{i,J}\::\: i=1,\ldots,n,\:J\in\mathbb{N}^m]$ where $x_{i}$ are differential indeterminates and $x_{i,J}:=\Theta(J)x_i$.
For $\Sigma\subset {K}_{m,n}$, we set $\Theta \Sigma=\{\Theta(J)f \::\: J \in \mathbb{N}^m, f \in \Sigma\}$.

\begin{definition}
\label{def:diff_id}
Let $\Sigma\subset {K}_{m,n}$. The differential ideal $[\Sigma]\subset {K}_{m,n}$ spanned by $\Sigma$ is the minimal ideal containing $\Sigma$ and being closed under taking derivatives.
\end{definition}

An element $P\in {K}_{m,n}$ is called a \textit{differential polynomial}, and the \textit{order} of $P$ is defined as the maximum of the $|J|=J_1+\cdots+J_m$ effectively appearing in $P$. The variables $x_{i,J}$, $i=1,\ldots,n$, $J\in\mathbb{N}^m$, in $K_{m,n}$ denote differential variables. We can define a map $P:{K}[\![T]\!]^n\xrightarrow[]{}{K}[\![T]\!]$ in which a monomial $E_M=\prod_{i,J}x_{i,J}^{m_{i,J}}$ sends the vector $\varphi=(\varphi_1,\ldots,\varphi_n)\in {K}[\![T]\!]^n$ to $E_M(\varphi)=\prod_{i,J}(\frac{\partial^{|J|}\varphi_i}{\partial t_{1}^{j_1} \cdots \partial t_m^{j_m}})^{m_{i,J}}$.

\begin{definition}
We say that $\varphi=(\varphi_1,\ldots,\varphi_n)\in {K}[\![T]\!]^n$ is a \textit{solution} of $P\in {K}_{m,n}$ if $P(\varphi)=0$. 
We denote by $\text{Sol}(P)$ the set of solutions of $P\in {K}_{m,n}$. Let $\Sigma\subset {K}_{m,n}$. The differential (algebraic) variety defined by $\Sigma$ is the set of common solutions $\text{Sol}(\Sigma)=\bigcap_{P\in \Sigma} \text{Sol}(P)\subset {K}[\![T]\!]^n$.
\end{definition}

Let $\Sigma \subset {K}_{m,n}$ be a system of differential equations such that the radical differential ideal generated by $\Sigma$ is prime. 
We need to require that $\Sigma$ is in a reduced form such that system such as $\{y, y'-1\}$ are further simplified before continuation. Formally this can for instance be achieved by imposing that $\Sigma$ is autoreduced~\cite{kolchin1973differential}. 
Let $L$ denote the set of leaders of $\Omega$. Then the transcendence degree of the general solution of $\Sigma$ is equal to the cardinality $d$ of the set $\Theta \{x_1,\ldots,x_n\} \setminus \Theta L$. 
We say that $\Sigma$ is of \textit{differential type zero} if and only if $d$ is finite. 
For the general notion of differential type and more details on the dimension of the solution set we refer to~\cite{kolchin1973differential, lange2014counting}.

\subsection{The linear case}
In this section we apply the results of the previous sections to the space of solutions $\text{Sol}(\Sigma)$ of a homogeneous linear system of differential equations $\Sigma\subset K_{m,n}$.
We start with some definitions.

\begin{definition}
\label{def:lin_eq}
An (algebraic) linear differential equation is a linear polynomial $P\in {K}_{m,n}$, i.e. $P=\sum_{i,J}\alpha_{i,J}x_{i,J}+\alpha$, with $\alpha_{i,J},\alpha\in {K}[\![T]\!]$. We say that $P$ is homogeneous if $\alpha=0$.
\end{definition}

If $P\in {K}_{m,n}$ is linear and homogeneous, then it is easy to see that $\text{Sol}(P)\subset {K}[\![T]\!]^n$ is a ${K}$-vector space. Thus if $\Sigma\subset {K}_{m,n}$ is a system of homogeneous linear differential equations, then $\text{Sol}(\Sigma)$ is also a ${K}$-vector space.

\begin{remark}\label{rem:differentialtypelinearcase}
Let us note that for homogeneous linear systems of differential equations $\Sigma$, the differential ideal generated by $\Sigma$ is always prime and every autoreduced set $\Omega$ of $\Sigma$ is homogeneous and linear as well. 
Moreover, the transcendence degree of the general solution $d$, if it is finite, is the dimension of $\text{Sol}(\Sigma)$~\cite[Chapter 3, Section 5]{kolchin1973differential}.
\end{remark}

With abuse of notation, we will denote by $\mathbb{N}^m$ the idempotent monoid $(\mathbb{N}^m,\cup,\emptyset)$, and we will denote by $\trop:{K}[\![T]\!]\xrightarrow[]{}2^{\mathbb{N}^m}$ the support map\footnote{The formal power series in the argument of $\trop$ can be identified as the list of coefficients such that this is consistent with Definition~\ref{def:supp_map}.}. If $n\geq 1$ and $X\subset {K}[\![T]\!]^n$, its set of supports (see \cite{falkensteiner2020fundamental}) is 
\begin{equation}
\label{eq:trop:eqdiff}
    \trop(X)=\{(\trop(w_1),\ldots,\trop(w_n))\in (2^{\mathbb{N}^m})^n\::\: (w_1,\ldots,w_n)\in X\}.
\end{equation}

The canonical order of the idempotent monoid $(2^{\mathbb{N}^m})^n$ defined by $(S_1,\ldots,S_n)\leq (T_1,\ldots,T_n)$ whenever $S_i\cup T_i=T_i$ for all $i=1,\ldots,n$ coincides with  the {\it product} or {\it cartesian order} $\leq_{prod}$ defined by $(S_1,\ldots,S_n)\leq_{prod} (T_1,\ldots,T_n)$ when $S_i\subseteq T_i$ for $i=1,\ldots,n$. Note that if $n\geq2$, then this  order differs from the inclusion order, thus, with the purpose of being able to apply the theory of the previous sections, first we need to perform the following transformation.


We only need the following extra procedures whenever $n\geq2$. Let us denote by $K[\![T^I\::\:I\in [n]\times\mathbb{N}^m]\!]$ the set of all maps $\varphi:[n]\times\mathbb{N}^m\xrightarrow[]{}K$ endowed with the sum given by point-wise addition of maps. We can represent the elements of this $K$-module as $\sum_{I\in [n]\times\mathbb{N}^m}a_It^I$.

We construct an isomorphism of $K$-modules $\Phi:({K}[\![t_1,\ldots,t_m]\!]^n,+)\to (K[\![T^I\::\:I\in [n]\times\mathbb{N}^m]\!],+)$ by sending the vector $\varphi=(\varphi_1,\ldots,\varphi_n)$ with $\varphi_i(I)=a_{i,I}$ to the map $\Phi(\varphi):[n]\times\mathbb{N}^m\xrightarrow[]{}K$ sending $(i,I)$ to $a_{i,I}$. The inverse map of $\Phi$ sends $\varphi\in K[\![T^I\::\:I\in [n]\times\mathbb{N}^m]\!]$ to $(\varphi_1,\ldots,\varphi_n)\in {K}[\![t_1,\ldots,t_m]\!]^n$, where $\varphi_i:=\sum_{I\in\mathbb{N}^m}\varphi(i,I)T^I$. 

This in turn induces an isomorphism of monoids $\phi:(2^{\mathbb{N}^m})^n\to 2^{[n]\times \mathbb{N}^{m}}$, where now $2^{[n]\times \mathbb{N}^{m}}$ is ordered by inclusion. The isomorphism sends the vector $(S_1,\ldots,S_n)$ to $\{1\}\times S_1\cup\cdots\cup \{n\}\times S_n$.


Then we have
\begin{equation}
    \label{eq:trop:eqdiff:chngd}
    \phi\circ\trop(X)=\{\{1\}\times\trop(w_1)\cup\cdots\cup\{n\}\times\trop(w_n)\in 2^{[n]\times \mathbb{N}^{m}}\::\: (w_1,\ldots,w_n)\in X\}.
\end{equation}

Thus, if $W\subset {K}[\![t_1,\ldots,t_m]\!]^n$ is a linear space, then $\Phi(W)\subset K[\![T^I\::\:I\in [n]\times\mathbb{N}^m]\!]$ is also a linear space which is isomorphic to $W$, and we have that $\phi\circ \trop(W)=\trop\circ\Phi(W) $, so we can apply the theory of the previous sections to the images under $\phi$ of set of supports \eqref{eq:trop:eqdiff} associated to finitely-dimensional vector spaces $W\subset {K}[\![t_1,\ldots,t_m]\!]^n$.

\begin{theorem}\label{Teorema_Matroide_Lin}
Let $\Sigma\subset {K}_{m,n}$ be a system of homogeneous linear differential equations of differential type zero and let $\trop(\text{Sol}(\Sigma))\subset (2^{\mathbb{N}^m})^n$ be the set of supports of $\text{Sol}(\Sigma)$. Then 
\begin{enumerate}
\item the minimal elements $\mathcal{C}(\Sol(\Sigma))$ of $\trop(\Sol(\Sigma))$ define the circuits of a matroid $M(\text{Sol}(\Sigma))$ on $[n]\times\mathbb{N}^{m}$;
    \item If $\# [n]\times\mathbb{N}^{m} < \# K$, then $\phi\circ\trop(\text{Sol}(\Sigma))\subset 2^{[n]\times \mathbb{N}^{m}}$ is a set of scrawls of the matroid $M(\text{Sol}(\Sigma))$.
\end{enumerate}
\end{theorem}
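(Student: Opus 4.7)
The plan is to reduce both items to Theorems~\ref{Teorema: es un matroide} and~\ref{Teorema Son los Scrawls} by transporting $\Sol(\Sigma)$ through the maps $\Phi$ and $\phi$ constructed just before the theorem, so as to view it as a finite-dimensional subspace of $K^E$ with $E=\mathbb{N}^{mn}$.

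First I would observe that, since $\Sigma$ consists of homogeneous linear differential polynomials, $\Sol(\Sigma)\subset K[\![T]\!]^n$ is a $K$-vector subspace, and by Remark~\ref{rem:differentialtypelinearcase} the hypothesis that $\Sigma$ is of differential type zero forces $\dim_K\Sol(\Sigma)<\infty$. Setting $W:=\Phi(\Sol(\Sigma))$ inside $K[\![t_{i,j}\::\:1\leq i\leq m,\:1\leq j\leq n]\!]$, the injectivity and $K$-linearity of $\Phi$ realize $W$ as a finite-dimensional $K$-vector subspace of $K^{\mathbb{N}^{mn}}$ (after identifying formal power series with their coefficient sequences). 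By the definition of $\phi$ one then has $\trop(W)=\phi\circ\trop(\Sol(\Sigma))$.

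For part (i), Theorem~\ref{Teorema: es un matroide} applied to $W$ yields that the minimal nonempty supports $\mathcal{C}(W)\subset 2^{\mathbb{N}^{mn}}$ are the circuits of a matroid on $E=\mathbb{N}^{mn}$. Since the embedding $\phi$ places the $n$ coordinates into pairwise disjoint copies of $\mathbb{N}^m$ inside $\mathbb{N}^{mn}$, it transports the componentwise product order on $(2^{\mathbb{N}^m})^n$ to ordinary set inclusion on its image; in particular, it sends minimal elements to minimal elements, so $\phi(\mathcal{C}(\Sol(\Sigma)))=\mathcal{C}(W)$, giving the matroid $M(\Sol(\Sigma))$ claimed in (i). For part (ii), since $\#\mathbb{N}^{mn}=\#\mathbb{N}^{m}$, the hypothesis $\#E_m<\#S(K)$ is exactly the cardinality assumption of Theorem~\ref{Teorema Son los Scrawls}; applying it to $W$ shows that $\trop(W)=\phi\circ\trop(\Sol(\Sigma))$ is the set of scrawls of $M(W)=M(\Sol(\Sigma))$.

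The only real difficulty is the bookkeeping verification that $\Phi$ is an isomorphism of $K$-vector spaces onto its image and that $\phi$ commutes both with the passage to minimal elements and with unions. Both follow immediately from the disjointness of the blocks of variables $\{t_{i,1},\ldots,t_{i,m}\}$ for $i=1,\ldots,n$, so once this translation is carried out the theorem reduces to a direct invocation of the two earlier results.
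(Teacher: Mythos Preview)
Your proposal is correct and follows essentially the same route as the paper: reduce to a finite-dimensional $W\subset K^{\mathbb{N}^{mn}}$ via $\Phi$, identify $\phi\circ\trop(\Sol(\Sigma))=\trop(W)$, use the poset isomorphism to match minimal elements, and then invoke Theorems~\ref{Teorema: es un matroide} and~\ref{Teorema Son los Scrawls}. Your write-up is in fact slightly more explicit than the paper's (you spell out $\#\mathbb{N}^{mn}=\#\mathbb{N}^m$ and the disjoint-blocks justification for why $\phi$ preserves minimality), but the underlying argument is the same.
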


\begin{proof}
Since $\Sigma$ is of differential type zero, we have that $\{0\}\ne \text{Sol}(\Sigma)\subset {K}[\![T]\!]^n$ is a finite dimensional $K$-vector space.

We also have that the minimal elements of $\phi\circ\trop(\text{Sol}(\Sigma))\subset (2^{[n]\times\mathbb{N}^{m}},\subseteq)$ coincide with the minimal elements of $\trop(\text{Sol}(\Sigma))\subset ((2^{\mathbb{N}^m})^n,\leq_{prod})$, where $\leq_{prod}$ is the product order, since they  are isomorphic as posets. Thus $\phi(\mathcal{C}(\Sol(\Sigma)))=\mathcal{C}(\phi(\Sol(\Sigma)))$, so (i) follows from Theorem~\ref{Teorema: es un matroide}, and (ii) follows from Theorem~\ref{Teorema Son los Scrawls} after applying the inverse homomorphism $\phi^{-1}$ to the semigroup $\phi\circ\trop(\text{Sol}(\Sigma))\subset 2^{[n]\times\mathbb{N}^{m}}$.
\end{proof}

\begin{remark}
\label{rem:morphism:phi}
 If $n\geq2$, one must use the isomorphism $\phi:(2^{\mathbb{N}^m})^n\to 2^{[n]\times \mathbb{N}^{m}}$ in order to unveil the matroidal structures of the sets  $\text{Sol}(\Sigma)$. The isomorphism of posets yields that condition (iii) in Definition~\ref{Def_CIM} can be stated directly in terms of the poset $\trop(\text{Sol}(\Sigma))\subset ((2^{\mathbb{N}^m})^n,\leq_{prod})$. It should be interesting to see if the same can be done for condition (iv)  in Definition~\ref{Def_CIM}.
\end{remark}

\section{Applications to tropical differential algebraic geometry}
\label{Section:Connections}
We have that Theorem~\ref{Teorema_Matroide_Lin} from Section~\ref{section:diff_tls} is valid for the set of supports $\trop(\text{Sol}(\Sigma))\subset (2^{\mathbb{N}^m})^n$ of a system $\Sigma\subset {K}_{m,n}$ of homogeneous linear differential equations of differential type zero over an arbitrary field $K$. An important case occurs when $K$ \textit{satisfies the hypotheses of the \textsc{fttdag}}~\cite{boulier2021relationship}, namely when $K$ is an algebraically closed field of characteristic zero and has infinite transcendence degree over the field of definiton of $\Sigma$\footnote{Note that the latter is always fulfilled for uncountable fields.}.

\subsection{Tropical algebra preliminaries}
If $K$ satisfies the hypotheses of the \textsc{fttdag}, then we can express our results in a more algebraic form using the formalism of tropical algebra.
Recall that $2^{\mathbb{N}^{m}}=(2^{\mathbb{N}^{m}},\cup)$ is a semigroup.
The tropical counterparts of the underlying algebraic structures are as follows.
\begin{enumerate}
    \item Consider the Minkowski set sum $+:2^{\mathbb{N}^m}\times 2^{\mathbb{N}^m}\xrightarrow[]{}2^{\mathbb{N}^m}$;
    \item Define the {\it (tropical) differential operators} $D=\{\tfrac{\partial}{\partial t_i}:\mathbb{N}^m\xrightarrow[]{}\mathbb{N}^m\::\:i=1,\ldots,m\}$ by shifting the support accordingly as $$\tfrac{\partial}{\partial t_i}(S):=\{ (j_1, \ldots, j_{i-1}, j_i-1,j_{i+1}, \ldots, j_m) \::\: (j_1,\ldots,j_m) \in S, j_i > 0\}.$$
\end{enumerate}
Then the tuple $(2^{\mathbb{N}^m},\cup,+,D)$ is an (idempotent) differential semiring, see~\cite{falkensteiner2020fundamental}.

We give an alternative presentation of this algebraic structure. 
More details and the connection to the approach above is presented in~\cite{cotterill2020exploring}.

Let $\mathbb{B}=\{0<1\}$ be the Boolean semifield with the usual tropical addition $a+b:=min(a,b)$ and tropical multiplication $ab:=a+b$. If $\mathbb{B}[\![t_1,\ldots,t_m]\!]$ denotes the semiring of Boolean formal power series endowed with the standard operations of sum and product of Boolean power series, and $D=\{\tfrac{\partial}{\partial t_1},\ldots, \tfrac{\partial}{\partial t_m}\}$ denotes the set of standard partial derivations on $\mathbb{B}[\![t_1,\ldots,t_m]\!]$, then we have an isomorphism of (idempotent) differential semirings
\begin{equation}
\label{eq:base_set}
    (\mathbb{B}[\![t_1,\ldots,t_m]\!],+,\times,D)\cong(2^{\mathbb{N}^m},\cup,+,D).
\end{equation}
We denote by $\mathbb{B}_{m,n}$ the polynomial semiring $\mathbb{B}[\![T]\!][x_{i,J}\::\:i=1,\ldots,n,\:J\in\mathbb{N}^m]$. An element $P\in \mathbb{B}_{m,n}$ is called a differential polynomial, and the variables $x_{i,J}$, $i=1,\ldots,n$, $J\in\mathbb{N}^m$ in $\mathbb{B}_{m,n}$ denote differential variables.

We define a map $P:\mathbb{B}[\![T]\!]^n\xrightarrow[]{}\mathbb{B}[\![T]\!]$ in which a monomial $E_M=\prod_{i,J}x_{i,J}^{m_{i,J}}$ sends the vector $\varphi=(\varphi_1,\ldots,\varphi_n)\in \mathbb{B}[\![T]\!]^n$ to $E_M(\varphi)=\prod_{i,J}(\frac{\partial^{|J|}\varphi_i}{\partial t_{1}^{j_1} \cdots \partial t_m^{j_m}})^{m_{i,J}}$. 
The solutions of $P$ are defined in a tropical way as follows.

\begin{definition}
Given $A\in 2^{\mathbb{N}^m}$, its Newton polyhedron $\text{New}(A)$ is the convex hull of the set $\{I+J\::\: I\in A,J\in\mathbb{N}^m\}\subseteq\mathbb{R}_{\geq0}^m$.
We define the Newton polyhedron $\text{New}(\varphi)$ of $\varphi\in \mathbb{B}[\![t_1,\ldots,t_m]\!]$ by using the isomorphism from~\eqref{eq:base_set}.

The \emph{semiring of vertex polynomials} is defined as the quotient $V\mathbb{B}[T]:={\mathbb{B}[\![T]\!] }/{\text{New}}$, where $\text{New}\subset \mathbb{B}[\![T]\!] \times \mathbb{B}[\![T]\!]$ denotes the semiring congruence comprised of pairs of boolean power series with equal Newton polyhedra. We denote by $V:\mathbb{B}[\![T]\!]\xrightarrow[]{}V\mathbb{B}[T]$ the resulting quotient homomorphism of semirings.
\end{definition}

In the following we will denote the sum of equivalence classes in $V\mathbb{B}[T]$ by $``\oplus$''.
 
\begin{definition}
Given a sum
\begin{equation}
\label{eq:sum}
    s=a_1\oplus\cdots\oplus a_k
\end{equation}
in $V\mathbb{B}[T]$ involving $k \geq 2$ summands, let $s_{\:\widehat{i}}:=a_1\oplus\cdots\oplus\widehat{a_i}\oplus\cdots\oplus a_k$ denote the sum obtained by omitting the $i$-th summand, $i=1, \dots,k$.

The sum~\eqref{eq:sum} \emph{tropically vanishes} in $V\mathbb{B}[T]$ if $s= s_{\:\widehat{i}}\text{ for every }i=1,\ldots,k.$
\end{definition}

Given $P\in {K}_{m,n}$, the definition of a solution of $P$ can be given in a tropical way as follows.

\begin{definition}
\label{def_trop_sol}
We say that $\varphi=(\varphi_1,\ldots,\varphi_n)\in \mathbb{B}[\![T]\!]^n$ is a \textit{solution} of $\sum_Ma_ME_M=P\in \mathbb{B}_{m,n}$ if $$V(P(\varphi))=\bigoplus_MV(a_ME_M(\varphi))$$ vanishes tropically in $V\mathbb{B}[T]$. We denote by $\text{Sol}(P)$ the set of solutions of $P$. Recall that $E_M=\prod_{i,J}x_{i,J}^{m_{i,J}}$ represents a monomial and $a_M\in \mathbb{B}[\![T]\!]$ its corresponding coefficient. 
\end{definition}

Given $U\subset \mathbb{B}_{m,n}$ a system of tropical differential equations, we will denote by $\bigcap_{p\in U} \Sol(p)=\Sol(U)\subset \mathbb{B}[\![T]\!]^n$ its set of common solutions.

We will be mostly interested in the case where $P\in \mathbb{B}_{m,n}$ is linear as in Definition~\ref{def:lin_eq} where $\alpha_{i,J},\alpha\in \mathbb{B}[\![T]\!]$.
The tropical analogue of the fact that the set of solutions of a system of homogeneous linear differential equations is a vector space also holds for the case of homogeneous linear tropical differential equations in $\mathbb{B}_{m,n}$. Note that under the isomorphism~\eqref{eq:base_set}, the closure under taking unions translates to the closure under taking sums.
 
\begin{theorem}
\label{prop:B-Module}
Let $U\subset \mathbb{B}_{m,n}$ be a system of homogeneous linear tropical differential equations. Then $\Sol(U)\subset \mathbb{B}[\![T]\!]^n$ is a semigroup.
\end{theorem}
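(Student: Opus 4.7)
The plan is to reduce everything to showing that for each homogeneous linear $P\in U$ and any $\varphi,\psi\in\Sol(P)$, the Boolean sum $\varphi+\psi$ again satisfies the tropical vanishing condition from Definition~\ref{def_trop_sol}. Since a union of semigroups of a common ambient semigroup is a semigroup when taken as an intersection, it suffices to treat a single equation $P = \sum_{i,J}\alpha_{i,J}x_{i,J}$, with $\alpha_{i,J}\in\mathbb{B}[\![T]\!]$.

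The first step is a direct algebraic identity in $\mathbb{B}[\![T]\!]$. Each partial derivative $\tfrac{\partial}{\partial t_k}$ on $\mathbb{B}[\![T]\!]$ is additive (one checks this on supports using the shift description in~\eqref{eq:base_set}), hence so is every operator $\Theta(J)$. Combining this with distributivity of the Boolean product gives
\begin{equation*}
P(\varphi+\psi)\;=\;\sum_{i,J}\alpha_{i,J}\bigl(\Theta(J)\varphi_i+\Theta(J)\psi_i\bigr)\;=\;P(\varphi)+P(\psi).
\end{equation*}
Applying the semiring homomorphism $V$, which is additive with respect to $\oplus$ because $\text{New}(a+b)=\text{conv}(\text{New}(a)\cup\text{New}(b))$, yields termwise
\begin{equation*}
V\bigl(\alpha_{i,J}(\Theta(J)\varphi_i+\Theta(J)\psi_i)\bigr)\;=\;v_{i,J}\oplus w_{i,J},
\end{equation*}
where $v_{i,J}:=V(\alpha_{i,J}\Theta(J)\varphi_i)$ and $w_{i,J}:=V(\alpha_{i,J}\Theta(J)\psi_i)$.

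The second step is the tropical vanishing check. By hypothesis, the sums $\bigoplus_{i,J}v_{i,J}$ and $\bigoplus_{i,J}w_{i,J}$ both tropically vanish in $V\mathbb{B}[T]$, meaning each is unchanged when any single summand is dropped. Fix an index $(i_0,J_0)$; using associativity and commutativity of $\oplus$ together with the individual tropical vanishings, the sum $\bigoplus_{(i,J)\neq (i_0,J_0)}(v_{i,J}\oplus w_{i,J})$ rearranges as $\bigoplus_{(i,J)\neq(i_0,J_0)}v_{i,J}\;\oplus\;\bigoplus_{(i,J)\neq(i_0,J_0)}w_{i,J}$, which equals $\bigoplus_{i,J}v_{i,J}\oplus\bigoplus_{i,J}w_{i,J}=\bigoplus_{i,J}(v_{i,J}\oplus w_{i,J})=V(P(\varphi+\psi))$. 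Therefore $V(P(\varphi+\psi))$ tropically vanishes.

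The only subtle point, which I would expect to be the main source of care, is the distinction between working in $\mathbb{B}[\![T]\!]$ and working in the quotient $V\mathbb{B}[T]$: the identity $P(\varphi+\psi)=P(\varphi)+P(\psi)$ holds at the level of Boolean power series, but tropical vanishing is a statement about the image under $V$, so one must exploit that $V$ is a semiring homomorphism and that tropical vanishing is preserved under adding common $\oplus$-terms on both sides of an equality. Linearity and homogeneity of $P$ are essential, since they ensure there are no cross terms (as would appear for quadratic $P$) and no inhomogeneous summand that would not double up correctly.
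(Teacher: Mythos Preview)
Your proof is correct and follows essentially the same approach as the paper: reduce to a single homogeneous linear $P$, use additivity of $\Theta(J)$ and distributivity to obtain $P(\varphi+\psi)=P(\varphi)+P(\psi)$ in $\mathbb{B}[\![T]\!]$, and then verify tropical vanishing termwise. The paper's argument is considerably terser at the final step---it writes only $V(p(\phi))=V(p(\varphi))\oplus V(p(\psi))\subset V(p(\varphi))\cup V(p(\psi))$ and declares the proof finished---whereas you spell out explicitly why dropping a single summand $v_{i_0,J_0}\oplus w_{i_0,J_0}$ from $\bigoplus_{i,J}(v_{i,J}\oplus w_{i,J})$ leaves the total unchanged, which is the actual content of Definition~\ref{def_trop_sol}.
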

\begin{proof}
Since $\Sol(U)=\bigcap_{p\in U}\Sol(p)$, it suffices to show that $\Sol(p)$ is a semigroup for linear $p\in \mathbb{B}_{m,n}$. Let $\varphi,\psi\in \Sol(p)$ and set $\alpha=\varphi+\psi$, then $p(\alpha)=p(\varphi)+p(\psi)$, and it follows that $V(p(\alpha))=V(p(\varphi)+p(\psi))=V(p(\varphi))\oplus V(p(\psi))\subset V(p(\varphi))\cup V(p(\psi))$, which finishes the proof.
\end{proof}

So, if $U\subset \mathbb{B}_{m,n}$ is as in Theorem~\ref{prop:B-Module}, then $\Sol(U)\subset \mathbb{B}[\![T]\!]^n$ is a semigroup and thus, the union of tropical solutions are again tropical solutions. 
The structure of the semigroup $\Sol(U)$ was studied in~\cite{Trop_diff_eq} for the ordinary case ($m=1$) and $U$ finite. 
Following Remark \ref{rem:morphism:phi}, if $n\geq 2$, we can consider the image of $\Sol(U)$ under the map $\phi:\mathbb{B}[\![T]\!]^n\to \mathbb{B}[\![T^I\::\:I\in [n]\times\mathbb{N}^m]\!]$, which is also a semigroup, but it is not necessarily the set of scrawls of a matroid (see  Theorem~\ref{Teorema_Matroide_Lin}). 

{In Corollary~\ref{cor:real}, we give a necessary condition for a set $\phi(\Sol(U))$ to be the set of scrawls of a matroid. It would be interesting to find sufficient conditions under which a semigroup $\phi(\Sol(U))$ is the set of scrawls of a matroid.}

\subsection{Connections with the Fundamental Theorem}
In this section we analyze the special case when the coefficient field $K$ fulfills the hypotheses of the \textsc{fttdag}, this is,  ${K}$ is an uncountable algebraically closed field of characteristic zero.

To start with, by~\eqref{eq:base_set} we have an isomorphism of semirings $2^{\mathbb{N}^m}\cong\mathbb{B}[\![t_1,\ldots,t_m]\!]$, and we will denote by $\trop:{K}[\![t_1,\ldots,t_m]\!]\xrightarrow[]{}\mathbb{B}[\![t_1,\ldots,t_m]\!]$ the support map.  If $n\geq 1$ and $X\subset {K}[\![T]\!]^n$, its set of supports $\trop(X)\subset \mathbb{B}[\![t_1,\ldots,t_m]\!]^n$ is defined as in \eqref{eq:trop:eqdiff}.

Consider now a system $\Sigma\subset {K}_{m,n}$ of homogeneous linear differential equations of differential type zero over $K$. Then, by Theorem~\ref{Teorema_Matroide_Lin}, the set $\phi\circ\trop(\text{Sol}(\Sigma))$ is the set of scrawls of a matroid.

Now, the \textsc{fttdag} can be used to give a sufficient condition for {the solution set of tropical differential equations} to be the set of scrawls of a matroid, see~\ref{thm:fundamental}. This theorem gives an equality between the set $\trop(\Sol(\Sigma))$ and the set of formal Boolean power series solutions $\Sol(trop([\Sigma]))$ as in Definition~\ref{def_trop_sol} of some system $trop([\Sigma])\subset \mathbb{B}_{m,n}$.

\begin{definition}
\label{def:trop_pol}
Given $P=\sum_Ma_ME_M$ in $K_{m,n}$, we denote by $trop(P)$ the polynomial $trop(P)=\sum_M\trop(a_M)E_M$ in $\mathbb{B}_{m,n}$.  
\end{definition}

Recall that if $\Sigma\subset {K}_{m,n}$, then $[\Sigma]$ denotes the differential ideal spanned by it. The following result defines DA tropical varieties in three different ways, and also justifies the name.

\begin{theorem}\cite[Fundamental Theorem]{falkensteiner2023initials}\label{thm:fundamental}
Let $\Sigma\subset K_{m,n}$. 
Then the following three subsets of 
$\mathbb{B}[\![t_1,\ldots,t_m]\!]^n$ coincide.
\begin{enumerate}
    \item $X=\trop(\Sol(\Sigma))$;
    \item $X=\Sol(trop([\Sigma]))=\bigcap_{P\in [\Sigma]} \Sol(trop(P))$;
    \item $X=\{w\in \mathbb{B}[\![T]\!]^n\::\: in_w([\Sigma])\}$ contains no monomial.
\end{enumerate}
\end{theorem}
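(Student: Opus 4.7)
The plan is to prove the three equalities cyclically as $(i) \subseteq (ii) \subseteq (iii) \subseteq (i)$; the first two inclusions are essentially formal consequences of the definitions, while the third carries the real content of the theorem and is where the hypotheses on $K$ become indispensable. For $(i) \subseteq (ii)$, I would take $w = \trop(\varphi)$ with $\varphi \in \Sol(\Sigma)$. Since $[\Sigma]$ is obtained from $\Sigma$ by closure under the derivations and the ring operations, every $P \in [\Sigma]$ still vanishes on $\varphi$. Writing $P = \sum_M a_M E_M$ and reading $P(\varphi) = 0$ coefficient by coefficient in $K[\![T]\!]$: whenever a multi-index $I \in \mathbb{N}^m$ lies in the support of one summand $a_M E_M(\varphi)$, cancellation in $K$ forces $I$ to lie in the support of at least one other summand. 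Translated via the support map into $V\mathbb{B}[T]$, this is exactly the condition that $\bigoplus_M V(\trop(a_M)\, E_M(w))$ vanishes tropically, i.e. $w \in \Sol(trop(P))$.

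For $(ii) \subseteq (iii)$, I would suppose $w$ is a tropical solution of $trop(P)$ for every $P \in [\Sigma]$. The initial form $in_w(P)$ is built from those monomials of $P$ whose evaluation on $w$ reaches the extremal value at a vertex of the Newton polyhedron of $trop(P)(w)$. Tropical vanishing in $V\mathbb{B}[T]$ forces the extremum to be attained at least twice at every such vertex, so $in_w(P)$ is a sum of at least two terms and in particular is not a monomial. Since this holds for every $P \in [\Sigma]$, the initial ideal $in_w([\Sigma])$ contains no monomial.

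The heart of the theorem is $(iii) \subseteq (i)$, for which I would follow the strategy of~\cite{Fundamental_theorem_TropDiffAlgGeom,falkensteiner2023initials}. Given $w$ such that $in_w([\Sigma])$ is monomial-free, one constructs $\varphi \in K[\![T]\!]^n$ with $\trop(\varphi) = w$ and $P(\varphi) = 0$ for every $P \in \Sigma$ by a coefficient-by-coefficient lifting procedure: view $\varphi$ as a series with undetermined non-zero coefficients $c_{i,I}$ at the positions $(i,I)$ prescribed by $w$ and zeros elsewhere, and impose $P(\varphi) = 0$ term by term in the $t$-expansion. The monomial-freeness of every $in_w(P)$ guarantees that, at each order, the leading equation in the next unknown coefficient is not identically a monomial, hence cuts out a proper Zariski-closed condition whose complement is a non-empty Zariski-open subset of $K$. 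The main obstacle, and the precise reason the hypotheses on $K$ are required, is that at each step one must choose the next coefficient simultaneously avoiding countably many such proper constructible subsets arising from the full differential ideal; this countable avoidance is possible exactly when $K$ has sufficient transcendence degree over the field of definition of $\Sigma$ in the sense of~\cite{boulier2021relationship}, and its sharpness is what the counterexample in Section~\ref{sec:counterexp} is designed to exhibit.
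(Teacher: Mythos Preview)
The paper does not prove this theorem: it is quoted verbatim from \cite{falkensteiner2023initials} (with the ordinary case going back to \cite{Fundamental_theorem_TropDiffAlgGeom}) and used as a black box in Corollary~\ref{cor:real}. There is therefore no proof in the paper to compare your proposal against.

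That said, your sketch matches the overall architecture of the proofs in \cite{Fundamental_theorem_TropDiffAlgGeom,falkensteiner2020fundamental,falkensteiner2023initials}: the inclusions $(i)\subseteq(ii)\subseteq(iii)$ are formal, and the substance lies in the lifting step $(iii)\subseteq(i)$, which is exactly where the hypothesis on $K$ enters and which Section~\ref{sec:counterexp} is designed to probe. One caution on your description of that step: the actual argument in the cited papers does not proceed by a straightforward ``coefficient-by-coefficient'' recursion in the $t$-expansion, because in the partial case ($m>1$) there is no natural well-order on $\mathbb{N}^m$ to recurse along, and even in the ordinary case the equations coming from $[\Sigma]$ do not decouple into a single constraint per coefficient. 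The published proofs instead work with the full countable family of algebraic conditions simultaneously and invoke a countable-avoidance/transcendence argument to find a point in the intersection of the complements; your phrase ``choose the next coefficient simultaneously avoiding countably many such proper constructible subsets'' gestures at this, but the preceding sentence suggesting a clean inductive choice of one coefficient at a time would not survive as written.
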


\begin{definition}
Any subset $X\subset \mathbb{B}[\![t_1,\ldots,t_m]\!]^n$ satisfying one of the characterizations of the above theorem is called a DA tropical variety.
\end{definition}

For a variety of examples and further discussion on the \textsc{fttdag}, see~\cite{boulier2021relationship}.

We now present the following result. 

\begin{corollary}
\label{cor:real}
Let $U\subset \mathbb{B}_{m,n}$ and $X=\bigcap_{p\in U} \Sol(p)$. If $U=trop([\Sigma])$, where $\Sigma\subset {K}_{m,n}$ is a system of homogeneous linear differential equations of differential type zero, then $\phi(X)$ is {the set of scrawls of $\phi(\mathcal{C}(\Sol(\Sigma)))$}.
\end{corollary}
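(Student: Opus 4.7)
The plan is to combine the Fundamental Theorem of tropical differential algebraic geometry (Theorem~\ref{thm:fundamental}) with the matroid structure result of Section~\ref{section:diff_tls} (Theorem~\ref{Teorema_Matroide_Lin}), and the rest is a chase of definitions.

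First I would invoke the \textsc{fttdag}. Since by hypothesis $U=trop([\Sigma])$, applying Theorem~\ref{thm:fundamental} (the equivalence of (i) and (ii)) to the classical system $\Sigma$ gives
\begin{equation*}
X \;=\; \bigcap_{p\in U}\Sol(p) \;=\; \Sol(trop([\Sigma])) \;=\; \trop(\Sol(\Sigma))
\end{equation*}
inside $\mathbb{B}[\![T]\!]^n$. Applying the injective monoid homomorphism $\phi:(2^{\mathbb{N}^m})^n\to 2^{\mathbb{N}^{mn}}$ (introduced before Theorem~\ref{Teorema_Matroide_Lin}) to both sides then yields $\phi(X)=\phi\circ\trop(\Sol(\Sigma))$.

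Next I would invoke Theorem~\ref{Teorema_Matroide_Lin}. Because $\Sigma$ is a homogeneous linear system of differential type zero, $\Sol(\Sigma)$ is a finite-dimensional $K$-vector subspace of $K[\![T]\!]^n$ (cf.\ Remark~\ref{rem:differentialtypelinearcase}), so the theorem applies. Under the running standing hypothesis that $K$ satisfies the assumptions of the \textsc{fttdag}—which, as is used throughout Section~\ref{Section:Connections}, we take in the form that ensures the cardinality bound $\#\mathbb{N}^{mn}<\#S(K)$ (e.g.\ $K$ uncountable)—both parts of Theorem~\ref{Teorema_Matroide_Lin} hold. Part~(i) says that $\phi(\mathcal{C}(\Sol(\Sigma)))$ is the set of circuits of a matroid on $\mathbb{N}^{mn}$, and part~(ii) says that $\phi\circ\trop(\Sol(\Sigma))$ is the set of scrawls of that same matroid. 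Combined with the equality from the first step, this gives exactly the claim that $\phi(X)$ is the set of scrawls of $\phi(\mathcal{C}(\Sol(\Sigma)))$.

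The argument itself is essentially an assembly of two black boxes, so the only genuine point of care is bookkeeping: one must check that the hypotheses required by Theorem~\ref{Teorema_Matroide_Lin}(ii) (namely the cardinality inequality on $K$) are compatible with, and indeed are subsumed by, the hypotheses under which the \textsc{fttdag} is being applied. This is precisely what the section assumes and what Example~\ref{Ejemplo de cuando la cardinalidad no es suficiente} and the counterexample of Section~\ref{sec:counterexp} show cannot be dispensed with—so no new obstacle arises beyond recording that the two cardinality conditions match.
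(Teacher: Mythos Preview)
Your proposal is correct and follows essentially the same approach as the paper's own proof: apply the Fundamental Theorem~\ref{thm:fundamental} to identify $X=\Sol(trop([\Sigma]))$ with $\trop(\Sol(\Sigma))$, push through $\phi$, and then invoke Theorem~\ref{Teorema_Matroide_Lin}. Your explicit discussion of why the cardinality hypothesis of Theorem~\ref{Teorema_Matroide_Lin}(ii) is compatible with the standing \textsc{fttdag} assumptions is a useful clarification of a point the paper leaves implicit.
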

\begin{proof}
Follows from the above result after applying the Fundamental Theorem~\ref{thm:fundamental}, since $\trop(\Sol(\Sigma))=\Sol(trop([\Sigma]))=\bigcap_{P\in [\Sigma]} \Sol(trop(P))=X$. Then we apply $\phi$ to both ends of the equality.
\end{proof}

We have shown that if $W=\Sol([\Sigma])$, where $\Sigma$ is as in Corollary~\ref{cor:real}, then $M(W)=(2^{[n]\times\mathbb{N}^{m}},\phi\circ\trop(W))$ is a matroid. Since a satisfactory theory of duality  exists for infinite matroids, and the dual  $M(W)^*$ of $M(W)$ is representable and finitary, it should be interesting to know if  its set of circuits $\mathcal{C}^*(W)$ relate to some notion of tropical basis for the ideal $[\Sigma]$.

\begin{example}
\label{ex:even_odd}
Let $\Sigma\subset K_{1,1}$ have the solutions $W$ generated by $\varphi_1=\sum_{i \ge 0} t^{2i}, \varphi_2=\sum_{i \ge 0} t^{2i+1}\in K[\![t]\!]$. Since $n=1$, there is no need to consider the map $\phi$, and it follows that
$$\trop(W) = \{ \trop(\varphi_1)= 2 \cdot \mathbb{N}, \trop(\varphi_2)= 2 \cdot \mathbb{N}+1, \emptyset, \mathbb{N} \}.$$
Thus, $\mathcal{C}= \{C_1=\trop(\varphi_1), C_2=\trop(\varphi_2)\}$. 
The bases of $\trop(W)$ are the maximal subsets of $\mathbb{N}$ that do not contain the set of even numbers nor the set of odd numbers. So they are complements of sets of the form $\{e,o\}$ where $e$ is an even number and $o$ an odd number. In particular, the bases of the dual are these pairs $\{e,o\}$.
The circuits of the dual (cocircuits) are now given as the pairs $\{e_1,e_2\}$, $\{o_1,o_2\}$ where $e_1,e_2$ are even and $o_1,o_2$ are odd numbers.
\end{example}

\section{Counterexample: the fundamental Theorem of tropical differential algebraic geometry over a countable field}
\label{sec:counterexp}
The article~\cite{Fundamental_theorem_TropDiffAlgGeom} by Aroca, Garay and Toghani proves a fundamental theorem for tropical differential algebraic geometry over uncountable fields. Using a non constructive result in~\cite{MR738249}, it was shown that there exists a system of algebraic partial differential equations over a countable field for which the fundamental theorem does not hold~\cite[Remark 7.3]{falkensteiner2020fundamental}. 
The question of whether the fundamental theorem holds for ordinary or even linear differential equations over countable fields remained open. 
As consequence of the results given in the previous sections, we give a negative answer to this question by constructing a counterexample as follows.

Fix a countable field $\Bbbk=\{a_0,a_1,a_2,\dots\}$, and consider the linear differential polynomial
\begin{equation}\label{eq:lin_diff_eq}
\Sigma:=\{y''+\gamma(t)y'+\beta(t)y\}\subset \Bbbk_{1,1}
\end{equation}
with $\gamma(t)=\sum_{i\ge 0}c_it^i$ and $\beta(t)=\sum_{i\ge 0}b_it^i$. In the following we want to build $b_i$ and $c_i$ such that Sol($\Sigma$) is generated by the two power series solutions (see Example~\ref{Ejemplo de cuando la cardinalidad no es suficiente})
\[
\varphi_1=1+\sum_{i\ge 2}t^i \quad \text{and} \quad \varphi_2=t+\sum_{i\ge 2}a_i t^i.
\]
Their supports are $\trop(\varphi_1)=\mathbb N\setminus \{1\}$ and $\trop(\varphi_2)=\mathbb N\setminus \{0\}$. 
For every $a_1,a_2\in \Bbbk$, also $a_1\phi_1-a_2\phi_2$ is a solution of $\Sigma$ and
\[
\trop(a_1\varphi_1-a_2\varphi_2)=\mathbb N\setminus \{j\}
\]
for some $j \in \mathbb{N}$ (or $\emptyset$ in the case of $a_1=a_2=0$). Note that there are no further solutions of $\Sigma$. The union $\trop(\varphi_1)\cup\trop(\varphi_2)=\mathbb{N}$ is a tropical solution (see Theorem~\ref{prop:B-Module}), but cannot be realized by any solution of~\eqref{eq:lin_diff_eq}.

Let us now construct the coefficients $b_i,c_i$ such that $\Sigma$ has a solution set generated by $\varphi_1,\varphi_2$. 
By plugging $\varphi_1$ into~\eqref{eq:lin_diff_eq}, we obtain
\begin{align*}
0 &= \sum_{i \ge 0}(i+2)(i+1)t^{i} + (\sum_{i\ge 0}c_it^i) \cdot (\sum_{i \ge 0}(i+1)t^{i}-1) + (\sum_{i\ge 0}b_it^i) \cdot (\sum_{i \ge 0}t^{i}-t) \\
&=\sum_{i \ge 0}((i+2)(i+1) + \sum_{j=0}^i (i+1-j)c_j - c_{i} + \sum_{j=0}^i b_j - b_{i+1} )t^{i}.
\end{align*}
Similarly, by plugging-in $\varphi_2$ into~\eqref{eq:lin_diff_eq} and setting $a_0=0,a_1=1$,
\begin{align*}
0 &= \sum_{i \ge 0}(i+2)(i+1)a_{i+2}t^{i} + (\sum_{i\ge 0}c_it^i) \cdot (\sum_{i \ge 0}(i+1)a_{i+1}t^{i}) + (\sum_{i\ge 0}b_it^i) \cdot (\sum_{i \ge 0}a_it^{i}) \\
&=\sum_{i \ge 0}((i+2)(i+1)a_{i+2} + c_i+ \sum_{j=0}^{i-1} (i+1-j)c_ja_{i+1-j} + \sum_{j=0}^{i-1} b_ja_{i-j})t^{i}.
\end{align*}
By coefficient comparison in both equations, seen as polynomials in $t$, we obtain the system of recurrence equations
\begin{align*}
b_{i+1} &= (i+2)(i+1) + \sum_{j=0}^i (i+1-j)c_j - c_{i} + \sum_{j=0}^i b_j, \\
c_{i+1} &= -(i+3)(i+2)a_{i+3} -\sum_{j=0}^i (i+2-j)c_ja_{i+2-j} - \sum_{j=0}^i b_ja_{i+1-j}
\end{align*}
for every $i \in \mathbb{N}$. By choosing $b_0, c_0$ as any element in $\Bbbk$, $\gamma(t), \beta(t)$ are uniquely determined from the given solutions $\varphi_1,\varphi_2$. Let us fix $b_0, c_0 \in \Bbbk$.

Then, since $\varphi_1,\varphi_2$ are linearly independent and $\Sigma$ consists of a single linear ordinary differential equation of order two, the solution set is indeed given exactly as the linear combinations of $\varphi_1$ and $\varphi_2$.

\begin{remark}
Notice that $\Bbbk$ can be chosen as the algebraic closure of the rational numbers. Then $\gamma(t)$ and $\beta (t)$ are (non-convergent) formal power series and $\Sigma$ is a system of linear differential equations involving formal power series coefficients. We thus have shown that for this frequently used case the fundamental theorem does not hold. For holononomic systems, i.e. when all coefficients of the elements in $\Sigma$ are polynomial, this question remains open.
\end{remark}

\begin{remark}

Within this paper, we have worked with homogeneous linear differential systems together with linear spaces. We expect that the results can be generalized to affine representable matroids~\cite[Section 1.5]{Oxley} and (non-homogeneous) linear differential systems of differential type zero. 
\end{remark}

\section*{Acknowledgments}
The author F.A. was supported by the PAPIIT project IN113323 dgapa UNAM. L.B. was supported by the PAPIIT projects IA100122 and IA100724 dgapa UNAM and CONAHCyT project CF-2023-G-106. S.F. is partially supported by the grant PID2020-113192GB-I00 (Mathematical Visualization: Foundations, Algorithms and Applications) from the Spanish MICINN and by the OeAD project FR 09/2022. C.G. wishes to thank David Fernández-Bretón for valuable conversations. We would like to thank the anonymous referee for having suggested the incorporation of Section 2.3 in this paper.

\footnotesize{
\bibliographystyle{alpha}
\newcommand{\etalchar}[1]{$^{#1}$}

}

\end{document}